\numberwithin{equation}{section}
\newtheorem{theorem}{Theorem}[section]
\newtheorem{proposition}[theorem]{Proposition} 
\newtheorem{lemma}[theorem]{Lemma}              
\theoremstyle{definition}
\newtheorem{definition}[theorem]{Definition}        
\newtheorem{remark}[theorem]{Remark}              
\theoremstyle{remark}
\newtheorem{example}[theorem]{Example}          
\tikzstyle arrowstyle=[scale=1.5]
\tikzstyle directed=[postaction={decorate,decoration={markings,
    mark=at position .625 with {\arrow[arrowstyle]{stealth}}}}]
\tikzstyle directed2=[postaction={decorate,decoration={markings,
    mark=at position .535 with {\arrow[arrowstyle]{stealth}}}}]
\newcommand\red{\color{red}}
\newcommand\blue{\color{blue}}
\renewcommand\S{\mathcal{S}}
\newcommand\R{\mathbb{R}}
\newcommand\N{\mathbb{N}}
\newcommand\Z{\mathbb{Z}}
\newcommand\bz{\mathbf{z}}
\DeclareMathOperator{\Id}{Id}
\newcommand\ba{\mathbf{a}}
\newcommand\bb{\mathbf{b}}
\newcommand\bc{\mathbf{c}}
\newcommand\bd{\mathbf{d}}
\newcommand\bp{\mathbf{p}}
\newcommand\lu{\mathcal{U}}
\newcommand\ld{\mathcal{D}}
\newcommand\su{\boldsymbol{1}}
\newcommand\sz{\boldsymbol{0}}
\newcommand\tp{(\text{\small$i$}{,}\text{\small$i+1$})}
\newcommand\tq[2]{(\text{\small$#1$}{,}\text{\small$#2$})}
\DeclareMathOperator{\Rev}{Reverse}
\title{Pak-Stanley labeling of the $m$-Catalan hyperplane arrangement}
\author[R.~Duarte]{Rui Duarte} \address{CIDMA and Department of Mathematics, University of Aveiro, 3810-193 Aveiro, Portugal}
\email{rduarte@ua.pt}
\author[A.~Guedes~de~Oliveira]{Ant\'onio Guedes de Oliveira}
\address{CMUP and Department of Mathematics, Faculty of Sciences, University of Porto, 4169-007 Porto, Portugal}
\email{agoliv@fc.up.pt}
\begin{document}

\begin{abstract}
We characterize in simple terms the Pak-Stanley labels $\lambda(R)$ of the regions  $R$ of the $m$-Catalan arrangement. We also propose a simple algorithm that returns $R$ from $\lambda(R)$. Finally, we characterize in close terms the labels of the
relatively bounded regions.
\end{abstract}

\maketitle

\section{Introduction}
In this paper we extend a construction, due to Pak and Stanley \cite{Stan2}, in which every region
$R$ of the Shi arrangement of hyperplanes is labeled with a function $\lambda(R)\in\N^n$,
in fact a \emph{parking function}, and the labeling is known to be bijective \cite{Stan2}, although an explicit inverse (explicitly defining $R$ out of $\lambda(R)$) is
not known (cf.~\cite{DGO}).

More precisely,  we consider the same construction applied to the regions of the \emph{Catalan arrangement}.
We characterize the labels, show that the labeling is still bijective and we explicitly define the inverse.

 Note that the introduction  of this construction by Pak and Stanley
triggered a variety of research projects in several directions.
Among them, we refer to  the introduction of a different bijective function $\alpha$, due to Athanasiadis and Linusson \cite{AthanLin}, such that $\alpha(R)$
(though in general different from $\lambda(R)$) is still a parking function.
We refer also to the generalization of the construction to $m$-Shi arrangements, giving rise to a generalization of parking functions, the
$m$-parking functions \cite{Stan2},  and to a much wider generalization that gives rise to $G$-parking functions, due to Mazin
\cite{Mazin,PosSha}. Among the projects, of course,  some  aim at deepening the study of the method in itself, for example by constructing an algorithm that returns $R$ from
$\lambda(R)$ and by proving that $\lambda$ maps the relatively bounded regions bijectively onto the prime parking functions \cite{DGO}.

In the meantime, we know the sets of labels of the regions of other arrangements labeled by the same construction, like the Ish arrangement \cite{DGO2} and like some arrangements that interpolate between the Shi arrangement and the Ish arrangement \cite{DGO3}. But we did not know what are the labels of the regions of the Catalan arrangement, perhaps the arrangement closest in structure to the Shi arrangement. This is what is done in this paper. More precisely, we characterize here in simple terms the labels $\lambda(R)$ of the regions $R$ of the $m$-Catalan arrangement with the Pak-Stanley labeling. Moreover, we consider the inverse that returns $R$ out of $\lambda(R)$. Finally, we also characterize in close terms the labels of the relatively bounded regions. This is mostly done in a self-contained and elementary way.

The key ingredient is the notion of \emph{center}, that we introduced in the construction of an algorithm that returns $R$ from $\lambda(R)$ in the Shi arrangement \cite{DGO}.
The center characterizes the Ish parking functions \cite{DGO2} and the labels of the regions of the arrangements that
interpolate between the Shi arrangement and the Ish arrangement \cite{DGO3}. Finally, we proved \cite{DGO4} that the centers with a given length occur exactly as often in parking functions as in rook words of the same dimension (rook words
were used for labeling the regions of the Ish arrangement \cite{LRW}). Note that this notion of center is an extension of the same notion as defined in the cited articles.
In this paper, we prove two main properties: first, that the center (in the new sense) may characterize the increasing functions that are componentwise less than the maximum label of the fundamental chamber; second, that the center of $\ba\in\N^n$ does not change if e.g. we reflect $\ba$ in any of the
hyperplanes of equation $x_i=x_j$
(cf. Figure~\ref{2Catalan}, where $n=3$).

We note that the Catalan arrangement was considered by Athanasiadis in the wider context of general Weyl groups \cite{Athan}. The Pak-Stanley labeling, however, was not considered.

\section{Preliminaries}
As usual, set for integers $k,n$ such that $k\leq n$, $[k,n] :=\{k,k+1,\dotsc,n-1,n\}$ and $[n]=[1,n]$.
\begin{definition}
Let  $n,k>0$ and $\ell\geq0$ be integers. We define
the \emph{$n$-dimensional $(k,\ell)$-Coxeter arrangement}  as the  arrangement
formed by the  $(\ell+k+1)\binom{n}{2}$
hyperplanes of $\R^n$ of equation
$$x_i-x_j=a, \text{ with } 1\leq i<j\leq n\,,\ a\in[-\ell,k]\,.$$
\end{definition}

Hence, the $n$-dimensional Shi arrangement is the $(1,0)$-Coxeter arrangement. Remember that
the \emph{regions} of the Shi arrangement, the connected components of the complement of the union of the
hyperplanes, can be bijectively labeled  by using
the Pak-Stanley labeling, defined as follows \cite[ad.]{Stan}.

The region $R_0$, bounded by the hyperplane of equation $x_1-x_n=1$ and by the hyperplanes of equation $x_i-x_{i+1}=0$ for $i=1,\dotsc,n-1$, is labeled with $\su:=(1,1,\dotsc,1)\in\N^n$~\footnote{This region is sometimes labeled with $\sz=(0,\dotsc,0)\in\Z^n$. In this case, usually the parking functions are our parking functions decreased by $\su$.}.
Given two regions, $R_1$ and $R_2$, separated by a unique hyperplane $H$ such that $R_0$ and $R_1$ are on the same side of $H$,
if the equation of $H$ is of form $x_i-x_j=0$ [resp. of form $x_i-x_j=1$] then the label of $R_2$ is equal to the label of $R_1$ except that
the $i^{\rm th}$ coordinate of $R_2$ is the $i^{\rm th}$ coordinate of $R_1$ increased by $1$
[resp. the $j^{\rm th}$ coordinate of $R_2$ is the $j^{\rm th}$ coordinate of $R_1$ increased by $1$].

In general, if the equation of $H$ is of form $x_i-x_j=m$ with $1\leq i<j\leq n$ and $m>0$, then the $j^{\rm th}$ coordinate of $R_1$
is increased by $1$.
If  $1\leq i<j\leq n$ but $m\leq0$, then it is the $i^{\rm th}$ coordinate of $R_1$ that is increased by $1$~\footnote{In fact, Stanley \cite{Stan,Stan2}
considers the opposite situation}.
In here, we prefer to always write the equation in form $x_i-x_j=m$ with $m\geq0$ (and with either $i>j$ or $i<j$). In this way,
 it is always the   $j^{\rm th}$ coordinate that is increased by $1$.
We now consider the same labeling, but applied to the regions of the classical Catalan arrangement,
which is the $(1,1)$-Coxeter arrangement.
In particular, we characterize the labels that occur in the labeling.

Note that, in the case of the Shi arrangement, the set of labels is the set of parking functions \cite{Stan}, which
 are the functions for which the unique weakly increasing rearrangement $(b_1,\dotsc,b_n)$ satisfies $b_i\leq i$ for every $i\in[n]$ (see \cite{Yan} for a recent survey).
But we know more. The $n$-dimensional $m$-Shi arrangement, which is the $(m,m-1)$-Coxeter arrangement,
has $(mn+1)^{n-1}$ regions. The labels of the Pak-Stanley labeling in this case form \cite{Stan} the
\emph{$m$-parking functions}  $\ba$ for which
the unique weakly increasing rearrangement $(b_1,\dotsc,b_n)$ satisfies
 $$b_i\leq m(i-1)+1\,.$$

Finally, we note that the  $n$-dimensional $m$-Catalan arrangement is the $(m,m)$-Coxeter arrangement. The number of regions of this arrangement is  $\frac{(m n+n)!}{(mn+1)!}=n!F(n,m)$, where $F(n,m)=\frac{1}{mn+1}\binom{mn+n}{mn}$ is a \emph{Fuss-Catalan number}.
In this paper we also characterize the Pak-Stanley labels of these regions, and show that the labeling is bijective.
 For an example, see Fig.~\ref{2Catalan}, where the $3$-dimensional
$2$-Catalan is represented with the $72=9!/7!$
regions thus labeled. We will further show that the Pak-Stanley labeling of the regions of the $(k,\ell)$-Coxeter arrangement is bijective if and only if either $k=\ell+1$ (so that the arrangement is the $k$-Shi arrangement) or  $k=\ell$ (where the arrangement is the $k$-Catalan arrangement).

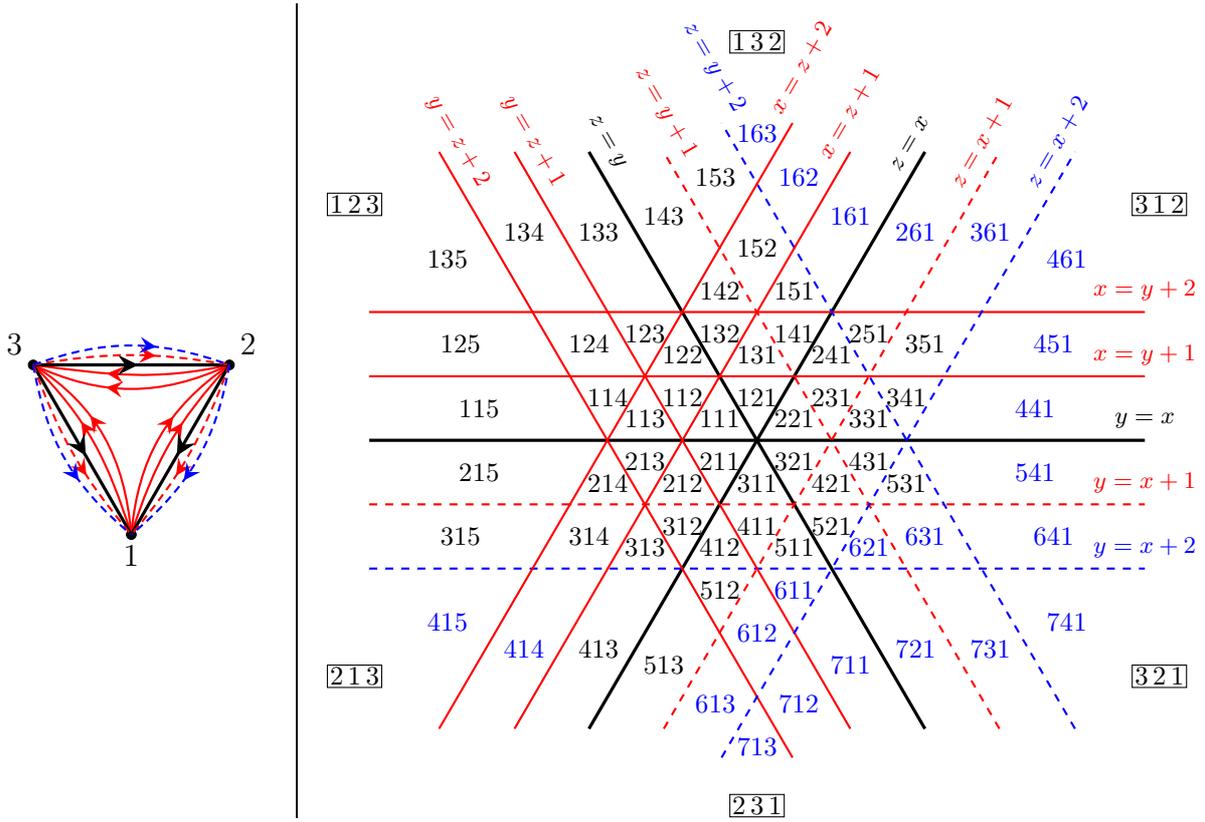
\begin{figure}[t]
\noindent
\begin{tikzpicture}[scale=1.5,baseline=-5.25cm]
\draw [fill] (-.86,.5) node [above left] {$3$} circle [radius=.0425];
\draw [fill] (.86,.5) node [above right] {$2$} circle [radius=.0425];
\draw [fill] (0,-1) node [below] {$1$} circle [radius=.0425];
\draw  [directed, red, thick]  (.86,.5) to [out=192.5,in=352.5] (-.86,.5);
\draw  [directed, red, thick]  (.86,.5) to [out=205,in=335] (-.86,.5);
\draw  [directed, red, thick]  (0,-1) to [out=72.5,in=227.5] (.86,.5);
\draw  [directed, red, thick]  (0,-1) to [out=85,in=215] (.86,.5);
\draw  [directed, red, thick]  (0,-1) to [out=95,in=327.5] (-.86,.5);
\draw  [directed, red, thick]  (0,-1) to [out=107.5,in=315] (-.86,.5);
\draw  [directed2, very thick]  (-.86,.5) -- (.86,.5);
\draw  [directed, red,  densely dashed, thick]  (-.86,.5) to [out=10,in=170] (.86,.5) ;
\draw  [directed, blue,  densely dashed, thick]  (-.86,.5) to [out=20,in=160] (.86,.5) ;
\draw  [directed2, very thick]  (-.86,.5) --  (0,-1) ;
\draw  [directed, red,  densely dashed, thick]  (-.86,.5) to [out=290,in=130] (0,-1) ;
\draw  [directed, blue,  densely dashed, thick]  (-.86,.5) to [out=280,in=140] (0,-1) ;
\draw  [directed2, very thick]  (.86,.5) -- (0,-1);
\draw  [directed, red,  densely dashed, thick]  (.86,.5) to [out=250,in=50] (0,-1) ;
\draw  [directed, blue,  densely dashed, thick]  (.86,.5) to [out=260,in=40] (0,-1) ;
\end{tikzpicture}\hfill\vrule\hfill
\begin{tikzpicture}[scale=2.55]
\draw[ draw=red] (0,-1.334) circle[radius=0.035mm];
\draw[ draw=red] (0,-.667) circle[radius=0.035mm];
\draw[ draw=red] (0,0) circle[radius=0.035mm];
\draw[ draw=red] (0,.666) circle[radius=0.035mm];
\draw[ draw=red] (0,1.333) circle[radius=0.035mm];
\draw[ draw=red] (-1.155,-.667) circle[radius=0.035mm];
\draw[ draw=red] (-1.155,.666) circle[radius=0.035mm];
\draw[ draw=red] (-.963,-.334) circle[radius=0.035mm];
\draw[ draw=red] (-.963,.333) circle[radius=0.035mm];
\draw[ draw=red] (-.770,-.667) circle[radius=0.035mm];
\draw[ draw=red] (-.770,0) circle[radius=0.035mm];
\draw[ draw=red] (-.770,.666) circle[radius=0.035mm];
\draw[ draw=red] (-.578,-.334) circle[radius=0.035mm];
\draw[ draw=red] (-.578,.333) circle[radius=0.035mm];
\draw[ draw=red] (-.385,-.667) circle[radius=0.035mm];
\draw[ draw=red] (-.385,0) circle[radius=0.035mm];
\draw[ draw=red] (-.385,.666) circle[radius=0.035mm];
\draw[ draw=red] (-.193,-1.000) circle[radius=0.035mm];
\draw[ draw=red] (-.193,-.334) circle[radius=0.035mm];
\draw[ draw=red] (-.193,.333) circle[radius=0.035mm];
\draw[ draw=red] (-.193,1.000) circle[radius=0.035mm];
\draw[ draw=red] (.192,-1.000) circle[radius=0.035mm];
\draw[ draw=red] (.192,-.334) circle[radius=0.035mm];
\draw[ draw=red] (.192,.333) circle[radius=0.035mm];
\draw[ draw=red] (.192,1.000) circle[radius=0.035mm];
\draw[ draw=red] (.384,-.667) circle[radius=0.035mm];
\draw[ draw=red] (.384,0) circle[radius=0.035mm];
\draw[ draw=red] (.384,.666) circle[radius=0.035mm];
\draw[ draw=red] (.577,-.334) circle[radius=0.035mm];
\draw[ draw=red] (.577,.333) circle[radius=0.035mm];
\draw[ draw=red] (.769,-.667) circle[radius=0.035mm];
\draw[ draw=red] (.769,0) circle[radius=0.035mm];
\draw[ draw=red] (.769,.666) circle[radius=0.035mm];dashed,blue
\draw[ draw=red] (.962,-.334) circle[radius=0.035mm];
\draw[ draw=red] (.962,.333) circle[radius=0.035mm];
\draw[ draw=red] (1.154,-.667) circle[radius=0.035mm];
\draw[ draw=red] (1.154,.666) circle[radius=0.035mm];

\draw  [very thick] (-2.000,0) -- (2.000,0) node [pos=1.,above,sloped]  {\scriptsize$y=x$};
\draw  [very thick] (-.867,-1.500) -- (.866,1.500) node [pos=1.,above,sloped]  {\scriptsize$z=x$};
\draw  [very thick]  (.866,-1.500) -- (-.867,1.500) node [pos=1.,above,sloped]  {\scriptsize$z=y$};
\draw  [thick,red] (-2.000,.333) -- (2.000,.333) node [pos=1.,above,sloped]  {\scriptsize$x=y+1$};
\draw  [thick,red] (-2.000,.667) -- (2.000,.667) node [pos=1.,above,sloped]  {\scriptsize$x=y+2$};
\draw  [thick,dashed,red] (-2.000,-.333) -- (2.000,-.333) node [pos=1.,above,sloped]  {\scriptsize$y=x+1$};
\draw  [thick,dashed,blue] (-2.000,-.667) -- (2.000,-.667) node [pos=1.,above,sloped]  {\scriptsize$y=x+2$};
\draw  [thick,dashed,red] (-.481,-1.500) -- (1.250,1.500)   node [pos=1,above,sloped]  {\scriptsize$z=x+1$};
\draw  [thick,red] (-1.251,-1.500) -- (.481,1.500)  node [pos=1.05,above,sloped]  {\scriptsize$x=z+1$};
\draw  [thick,dashed,red] (1.250,-1.500) -- (-.482,1.500) node [pos=1.05,above,sloped]  {\scriptsize$z=y+1$};
\draw  [thick,red] (.481,-1.500) -- (-1.251,1.500) node [pos=1.,above,sloped]  {\scriptsize$y=z+1$};
\draw  [thick,dashed,blue] (1.64, -1.5) -- (-0.183, 1.65) node [pos=1.075,above,sloped]  {\scriptsize$z=y+2$};
\draw  [thick,red] (0.183, -1.65) -- (-1.64, 1.5) node [pos=1.,above,sloped]  {\scriptsize$y=z+2$};
\draw  [thick,dashed,blue] (-0.183, -1.65) -- (1.64, 1.5) node [pos=1.,above,sloped]  {\scriptsize$z=x+2$};
\draw  [thick,red] (-1.64, -1.5) -- (0.183, 1.65) node [pos=1.075,above,sloped]  {\scriptsize$x=z+2$};
\node  at (-1.597, 0.944) {\footnotesize $135$};
\node[rectangle, inner sep=1pt, minimum width=3pt, draw=black, color=black]  at (-2.076, 1.227) {\footnotesize $1\,2\,3$};
\node[rectangle, inner sep=1pt, minimum width=3pt, draw=black, color=black]  at (0., 2.072) {\footnotesize $1\,3\,2$};
\node[rectangle, inner sep=1pt, minimum width=3pt, draw=black, color=black]  at (2.075, 1.227) {\footnotesize $3\,1\,2$};
\node[rectangle, inner sep=1pt, minimum width=3pt, draw=black, color=black]  at (2.075, -1.227) {\footnotesize $3\,2\,1$};
\node[rectangle, inner sep=1pt, minimum width=3pt, draw=black, color=black]  at (0., -1.9) {\footnotesize $2\,3\,1$};
\node[rectangle, inner sep=1pt, minimum width=3pt, draw=black, color=black]  at (-2.076, -1.227) {\footnotesize $2\,1\,3$};
\node  at (-1.203,1.083) {\footnotesize $134$};
\node  at (-.818,1.083) {\footnotesize $133$};
\node  at (-.482,1.166) {\footnotesize $143$};
\node  at (-.215,1.370) {\footnotesize $153$};
\node  at (0,1.594) {\blue\footnotesize $163$};
\node  at (0,1.000) {\footnotesize $152$};
\node  at (.214,1.370) {\blue\footnotesize $162$};
\node  at (.481,1.166) {\blue\footnotesize $161$};
\node  at (.817,1.083) {\blue\footnotesize $261$};
\node  at (1.202,1.083) {\blue\footnotesize $361$};
\node  at (1.596, 0.944) {\blue\footnotesize $461$};
\node  at (-1.530, 0.500) {\footnotesize $125$};
\node  at (-.867, 0.500) {\footnotesize $124$};
\node  at (-.578, 0.555) {\footnotesize $123$};
\node  at (-.385, 0.444) {\footnotesize $122$};
\node  at (-.193, 0.555) {\footnotesize $132$};
\node  at (0, 0.444) {\footnotesize $131$};
\node  at (.192, 0.555) {\footnotesize $141$};
\node  at (.384, 0.444) {\footnotesize $241$};
\node  at (.577, 0.555) {\footnotesize $251$};
\node  at (.866, 0.500) {\footnotesize $351$};
\node  at (1.529, 0.500) {\blue\footnotesize $451$};
\node  at (-1.434, 0.166) {\footnotesize $115$};
\node  at (-.770, 0.222) {\footnotesize $114$};,
\node  at (-.578, 0.111) {\footnotesize $113$};
\node  at (-.385, 0.222) {\footnotesize $112$};
\node  at (-.193, 0.111) {\footnotesize $111$};
\node  at (0, 0.222) {\footnotesize $121$};
\node  at (.192, 0.111) {\footnotesize $221$};
\node  at (.384, 0.222) {\footnotesize $231$};
\node  at (.577, 0.111) {\footnotesize $331$};
\node  at (.769, 0.222) {\footnotesize $341$};
\node  at (1.433, 0.166) {\blue\footnotesize $441$};
\node  at (-1.434,-.167) {\footnotesize $215$};
\node  at (-.770,-.223) {\footnotesize $214$};
\node  at (-.578,-.112) {\footnotesize $213$};
\node  at (-.385,-.223) {\footnotesize $212$};
\node  at (-.193,-.112) {\footnotesize $211$};
\node  at (0,-.223) {\footnotesize $311$};
\node  at (.192,-.112) {\footnotesize $321$};
\node  at (.384,-.223) {\footnotesize $421$};
\node  at (.577,-.112) {\footnotesize $431$};
\node  at (.769,-.223) {\footnotesize $531$};
\node  at (1.433,-.167) {\blue\footnotesize $541$};
\node  at (.577,-.556) {\blue\footnotesize $621$};
\node  at (-1.530,-.500) {\footnotesize $315$};
\node  at (-.867,-.500) {\footnotesize $314$};
\node  at (-.578,-.556) {\footnotesize $313$};
\node  at (-.385,-.445) {\footnotesize $312$};
\node  at (-.193,-.556) {\footnotesize $412$};
\node  at (0,-.445) {\footnotesize $411$};
\node  at (.192,-.556) {\footnotesize $511$};
\node  at (.384,-.445) {\footnotesize $521$};
\node at(.866,-.500){\blue\footnotesize $631$};
\node at(1.529,-.500){\blue\footnotesize $641$};
\node at(-1.597,-.945){\blue\footnotesize $415$};
\node at(-1.203,-1.084){\blue\footnotesize $414$};
\node at(-.818,-1.084){\footnotesize $413$};
\node at(-.482,-1.167){\footnotesize $513$};
\node at(-.215,-1.371){\blue\footnotesize $613$};
\node at(0,-1.000){\blue\footnotesize $612$};
\node at(0,-1.595){\blue\footnotesize $713$};
\node at(.214,-1.371){\blue\footnotesize $712$};
\node at(.481,-1.167){\blue\footnotesize $711$};
\node at(.817,-1.084){\blue\footnotesize $721$};
\node at(1.202,-1.084){\blue\footnotesize $731$};
\node at(1.596,-.945){\blue\footnotesize $741$};
\node at (-0.192, 0.778) {\footnotesize $142$};
\node at (0.192, 0.778) {\footnotesize $151$};
\node at (-0.192, -0.778) {\footnotesize $512$};
\node at (0.192, -0.778) {\blue\footnotesize $611$};
\end{tikzpicture}
\caption{Digraph associated with the $3$-dimensional \mbox{$2$-Catalan} arrangement (left), and the
Pak-Stanley labeling of the regions of the arrangement.
In blue, the labels that \emph{are not} $2$-parking functions, and within the rectangles the permutations corresponding to the six chambers (right).}
\label{2Catalan}
\end{figure}

\subsection{The chamber of a permutation}
Note that every $(k,\ell)$-Coxeter arrangement is a refinement of the \emph{braid arrangement}, that consists of the hyperplanes of equation $x_i=x_j$ for $1\leq i<j\leq n$. A region $\widetilde{R}$ of the braid arrangement is the set of points of coordinates
$(x_1,\dotsc,x_n)$ that satisfy
\begin{equation}
x_{\pi_1}>x_{\pi_2}>\dotsb>x_{\pi_n}
\label{eq.angle}\end{equation}
for a given (obviously unique) permutation $\pi=(\pi_1,\dotsc,\pi_n)\in\S_n$. We call $\widetilde{R}$
\emph{the chamber of $\pi$}.

For $\ba\in\N^n$ and for a permutation of $[n]$, $\pi\in\S_n$, let
\begin{align*}
&\ld(\ba)=\big(|\{j<i\mid a_j<a_i\}|\big)_{1\leq i\leq n}\,,\\
&\lu(\ba)=\big(|\{j<i\mid a_j>a_i\}|\big)_{1\leq i\leq n}\,,\\
\shortintertext{and}
&I(\pi)=\big(|\{j<\pi^{-1}(i)\mid \pi_j>i\}|\big)_{1\leq i\leq n}\\
&\hphantom{I(\pi)}\in[0,n-1]\times[0,n-2]\times\dotsb\times\{0\}\,.
\shortintertext{$I(\pi)$ is \emph{the inversion table} of $\pi$ \cite[p.36]{StanB}. Finally note that}
&I(\pi)=\lu(\pi)\circ\pi^{-1}\,.
\end{align*}

In any $(k,\ell)$-Coxeter arrangement and in every chamber of a permutation $\pi\in\S_n$ there is a unique region $r_\pi$
whose border contains the line defined by $x_1=x_2=\dotsb=x_n$.
The Pak-Stanley label of this region is $\mu(\pi):=\su+I(\pi)$, where
$I(\pi)$ is the inversion table of $\pi$, defined above \cite[ad.]{Stan2}.
Since the hyperplanes that divide the chamber do not separate the points of $R_0$ from the points of $r_\pi$,
the label $\mu(\pi)$ is \emph{minimal} in the chamber. More precisely, the label of any other region in the chamber is  $\mu(\pi)$ increased by $e_j$ (the vector with all coordinates zero but the $j$th, equal to $1$) for every
plane of equation $x_i-x_j=m$ with $m>0$ that separates the region from $r_\pi$.
In particular, this label is componentwise greater than $\mu(\pi)$.

For any positive integer $q$ and every $\pi\in\S_n$, we may choose a point $P_q$ in the chamber of $\pi$ such that
{$$x_{\pi_1}>x_{\pi_2}+q>x_{\pi_2}> x_{\pi_3}+q>x_{\pi_3}>\dotsb>x_{\pi_n}+q> x_{\pi_n}\,.$$}\\[-30pt]

\noindent
Hence,  for sufficiently large $q$, $P_q$ is separated from $r_\pi$ by all hyperplanes of form
$x_{\pi_i}-x_{\pi_j}=m$ with $1\leq i<j\leq n$ and either with $m\in[k]$ if $\pi_i<\pi_j$ or
with $m\in[\ell]$ if $\pi_j<\pi_i$.
This means that the region $R_\pi$ of the $(k,\ell)$-Coxeter arrangement that contains $P_q$ is labeled with
\begin{align}
M(\pi,k,\ell):=&\mu(\pi)+k(\ld(\pi)\circ\pi^{-1})+\ell(\lu(\pi)\circ\pi^{-1})\nonumber\\
=&\su+k(\ld(\pi)\circ\pi^{-1})+(\ell+1)(\lu(\pi)\circ\pi^{-1})\,.\label{max.def}
\end{align}
This label is  componentwise greater than any other label in the chamber, and, being $k>0$ and $\ell\geq0$, it is indeed maximal in the set of labels.
Note that, according to Mazin \cite{Mazin}, the set $\mathcal{L}$ of labels of the regions of the $(k,\ell)$-Coxeter arrangement  is the set of $G$-parking functions for the multi-digraph $G=(V,A)$ with set of vertices
$V=[n]$ and with $k$ arcs $(i,j)$ and $\ell+1$ arcs $(j,i)$ for every $1\leq i<j\leq n$ (Cf. Figure~\ref{2Catalan}).
Remember that a function $f\colon[n]\to\N_0$ is a \emph{$G$-parking function} if for every non-empty subset $I\subseteq[n]$ there exists a vertex $v\in I$ such that the number of elements $w\notin I$ such that $(v,w)\in A$, counted with multiplicity, is greater than  $f(i)$.
Hence,  by definition of $G$-parking function, $\mathcal{L}$ is a \emph{lower set} in the componentwise order, i.e.,  if $\ba\in\mathcal{L}$
 and $\bb\in\N^n$ is componentwise less that $\ba$, then  $\bb\in\mathcal{L}$.

\subsection{The fundamental chamber}
The chamber of the identity is \emph{the fundamental chamber}~\footnote{The fundamental chamber is also referred to as the \emph{dominant chamber} in the literature.}, which is hence the set of points
$P_{(x_1,\dotsc,x_n)}$ such that
\begin{equation}x_1>x_2>\dotsb>x_n\,.\label{rf.eq}\end{equation}
Note that the only hyperplanes that cross the fundamental chamber are those of equation $x_i-x_j=m$ with
$i<j$ and $m>0$ since  $x_i-x_j>0$ if $i>j$ for every point $P_{(x_1,\dotsc,x_n)}$ in the chamber.
Hence all the fundamental chambers of the $(k,\ell)$-arrangement for $k$ fixed
coincide, even in terms of the regions
that divide the chamber. These regions may be defined by considering the refinement of \eqref{rf.eq} of form
$$x_2+i_1\!+\!1\!>\!x_1\!>\!x_2+i_1\!>\!x_3+i_2\!+\!1\!>\!x_2\!>\!x_3+i_2>\dotsb>x_{n-1}\!>\!x_n+i_{n-1}$$
where $0\leq i_1,i_2,\dotsc,i_{n-1}\leq k$ but  where the term $x_j+i_{j-1}+1$ is omitted for every $j\in[n-1]$
such that $i_j=k$. Such a region is hence labeled
$$(1,1+i_1,1+i_1+i_2,\dotsc,1+i_1+\dotsb+i_{n-1})\,,$$
In particular,
the maximal label is
\begin{equation}
M(\Id_{[n]},k,\ell)=\su+k(\Id_{[n]}-\su)\,.\label{max.eq}
\end{equation}
(cf. \eqref{max.def} since $\ld(\Id_{[n]})=\Id_{[n]}-\su$ and $\lu(\Id_{[n]})=\sz$),
which means that the labels of the regions in the fundamental chamber are exactly the weakly increasing vectors $(a_1\dotsc,a_n)\in\N^n$ for which
\begin{equation}
a_i\leq 1+k(i-1)\,,\ \text{for every $i\in[n]$}\,.\label{fchamber.eq}
\end{equation}

\begin{figure}[h]
\noindent
\begin{tikzpicture}[scale=.75]
\draw[step=1cm,gray,very thin] (0,0) grid (7,3);
\draw[thin] (0,0) -- (7,0) -- (7,3) -- (0,3) -- cycle;
\draw [fill] (1,0) node [above left, inner sep=0pt, minimum width=0pt] {\scriptsize$(1,0)$} circle [radius=.0725] ;
\draw [fill] (1,1) node {} circle [radius=.0225];
\draw [fill] (3,1) node [above left, inner sep=0pt, minimum width=0pt] {\scriptsize$(3,1)$} circle [radius=.0725] ;
\draw [fill] (3,2) node {} circle [radius=.0225];
\draw [fill] (5,2) node [above left, inner sep=0pt, minimum width=0pt] {\scriptsize$(5,2)$} circle [radius=.0725] ;
\draw [fill] (5,3) node {} circle [radius=.0225];
\draw [fill] (7,3) node [above left, inner sep=0pt, minimum width=0pt] {\scriptsize$(kn+1,n)$} circle [radius=.0725] ;
\draw [ultra thick]  (0,0) -- (1,0) -- (1,1) -- (3,1) -- (3,2) -- (5,2) -- (5,3) -- (7,3);
\draw [very thick,red, dashed] (1,0) -- (7,3);
\end{tikzpicture}
\caption{Dyck path representation of $135$ ($n=3$ and $k=2$)}
\label{Dycka}
\end{figure}
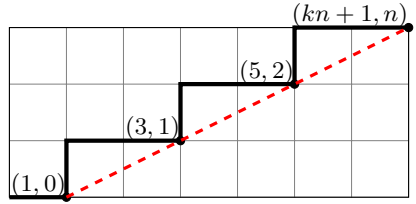

We may represent $M(\Id_{[n]},k,k)=:(m_1,\dotsc,m_n)$ as a Dyck path through the points $(m_i,i-1)$ (aligned
in the diagonal $d$ that joins $(1,0)$ to $(kn+1,n)$) and $(m_i,i)$ for every $i\in[n]$, and ending at
$(kn+1,n)$ (see Figure~\ref{Dycka}). Hence, the labels of the regions of the fundamental chamber correspond to the Dyck paths weakly \emph{above} or \emph{to the left of} $d$.
By  \cite[Theorem 10.4.5]{Krat}, the number of regions is the \emph{Fuss-Catalan number}
$$F(n,k)=\frac{1}{n}\binom{kn+n}{n-1}=\frac{1}{kn+1}\binom{kn+n}{kn}\,.$$

\subsection{The case $k=\ell$}
Let $\pi\in\S_n$, consider the action of $\pi$ on $\R^n$ defined by
\begin{align*}
&\pi(P)=\pi(x_1,x_2,\dotsc,x_n) :=(x_{\pi^{-1}(1)},x_{\pi^{-1}(2)},\dotsc,x_{\pi^{-1}(n)})\\
\shortintertext{and note that the fundamental chamber of any $(k,\ell)$-Coxeter arrangement is mapped onto the corresponding chamber of $\pi$, since}
&x_i=(x_{\pi^{-1}(1)},x_{\pi^{-1}(2)},\dotsc,x_{\pi^{-1}(n)})_{\pi(i)}\,.
\end{align*}
For the same reason, if some point $P_{(x_1,\dotsc,x_n)}$ of the fundamental chamber is contained in the set  defined by
$x_i-x_j<m$ [resp., $x_i-x_j=m$, $x_i-x_j>m$] ($1\leq i<j\leq n, 0<m\leq k$) then $\pi(P)$  is contained in the set defined by
$x_{\pi(i)}-x_{\pi(j)}<m$ [resp., $x_{\pi(i)}-x_{\pi(j)}=m$, $x_{\pi(i)}-x_{\pi(j)}>m$] and vice-versa.
If $k=\ell$,  then $x_{\pi(i)}-x_{\pi(j)}=m$ is again a hyperplane of the $(k,\ell)$-Coxeter arrangement. Hence,
$\pi$  bijectively maps the regions of the fundamental chamber onto the regions of the chamber of $\pi$, and
thus the number of regions of the Catalan arrangement is
$$n!\,F(n,k)\,.$$
This also means that, if $\ba$ is the Pak-Stanley label of the region that contains $P$ and $\bb$ is the Pak-Stanley label of the region that contains $\pi(P)$, then
$$\bb=I(\pi)+\ba\circ\pi^{-1}\,.$$
In particular, the maximal label is
\begin{align*}
&M(\pi,k,k)=I(\pi)+M(\Id_{[n]},k,k)\circ\pi^{-1}\\
&\hphantom{M(\pi,k,k)}=I(\pi)+\big(\su+k(\Id_{[n]}-\su)\big)\circ\pi^{-1}\\
&\hphantom{M(\pi,k,k)}=\mu(\pi)+k(\pi^{-1}-\su)\,.\footnotemark
\end{align*}
\footnotetext{A similar calculation shows that in the $m$-Shi arrangement the maximal label of the chamber of
$\pi$ is $$M(\pi,m,m-1)=\su+m(\pi^{-1}-\su)\,.$$
Since the set $\mathcal{L}$ of labels of the regions of the $(k,\ell)$-Coxeter arrangement  is a lower set, $\mathcal{L}$ is the set of $m$-parking functions.}

\begin{example} The maximal label of the fundamental chamber in Figure~\ref{2Catalan} is
\begin{align*}
&135=111+2(123-111)\,.\\
\shortintertext{The points of the region labeled with $112$ verify}
&x_2+1>x_1>x_3+1>x_2>x_3\\
\shortintertext{and whence the region is the intersection of the half-spaces defined by}
&\negthickspace\begin{array}{lcr}x_1&<&x_2+1\\x_1&>&x_3+1\\x_2&<&x_3+1\end{array}\\
\shortintertext{whereas e.g. the region labeled $125$ verifies (note that $k=2$)}
&x_2+2>x_1>x_2+1>x_2>x_3+2\,.
\shortintertext{Let $\pi=3\,1\,2$ and $\bb$ be the label of the region in the chamber of $\pi$ and in the $\S_3$-orbit of the region labeled $125$.
Then, in fact,}
&\bb=110+125\circ2\,3\,1\\
&\hphantom{\bb}=110+251\\
&\hphantom{\bb}=361\,.
\end{align*}
\end{example}

\subsection{The cases $\ell\neq k\neq\ell+1$}
If $k=\ell+1$ we are in the presence of the $k$-Shi arrangement, where the Pak-Stanley labeling \emph{bijectively} labels the regions  with $k$-parking functions.
In the sequel, we prove that the same happens if $k=\ell$. More precisely, we prove that in this case the Pak-Stanley labeling is also injective (and the image is the set of $k$-Catalan functions).
In order to prove that the same does not happen when $\ell\neq k\neq\ell+1$, we distinguish two cases, when $k>\ell+1$ and when $k<\ell$.

In the first case, we consider the points in dimension $n$
$$P_{(0.8,k+0.3,0,-0.4/n,-0.5/n,\dotsc,-0.1)} \hfill\text{ and }\hfill
Q_{(1.2,k-0.3,0,-0.4/n,-0.5/n,\dotsc,-0.1)}\,.$$
Their position differs exactly in relation with the hyperplanes of equation $x_1-x_3=1$ and $x_2-x_3=k$, and so, on the one hand, they are in different regions of the arrangement, and on the other hand,
the two regions are labeled with the same label (V. Figure~\ref{ninj.fig}).

If $k<\ell$, then we take
$$P_{(-0.2,\ell-0.3,0,-0.4/n,-0.5/n,\dotsc,-0.1)} \hfill\text{ and }\hfill
Q_{(0.2,\ell+0.3,0,-0.4/n,-0.5/n,\dotsc,-0.1)}\,.$$
These points differ exactly on the position relative to the hyperplanes of equation $x_3=x_1$ and $x_2-x_1=\ell$, and again the two corresponding regions receive the same label.

Note that the labels of the $(k,\ell)$-Coxeter arrangement are the reverse of the labels of the $(\ell+1,k-1)$-Coxeter arrangement (cf. Figure~\ref{ninj.fig}). To see this, since the corresponding sets of labels are lower sets, it is sufficient to
see that the maximal labels are reverse to each other.
But, if $\pi\in\S_n$ and $\rho^{-1}=\Rev(\pi^{-1})$,  then
$\ld(\rho)=\lu(\pi)$ and $\lu(\rho)=\ld(\pi)$, and then, according to \eqref{max.def},
\begin{align*}
M(\pi,k,\ell)&=\su+k(\ld(\pi)\circ\pi^{-1})+(\ell+1)(\lu(\pi)\circ\pi^{-1})\\
M(\rho,\ell+1,k-1)&=\su+(\ell+1)(\ld(\rho)\circ\rho^{-1})+k(\lu(\rho)\circ\rho^{-1})\\
&=\Rev\big(M(\pi,k,\ell)\big)\,.
\end{align*}

\renewcommand{\ULthickness}{1.pt}
\ULdepth=0.3ex
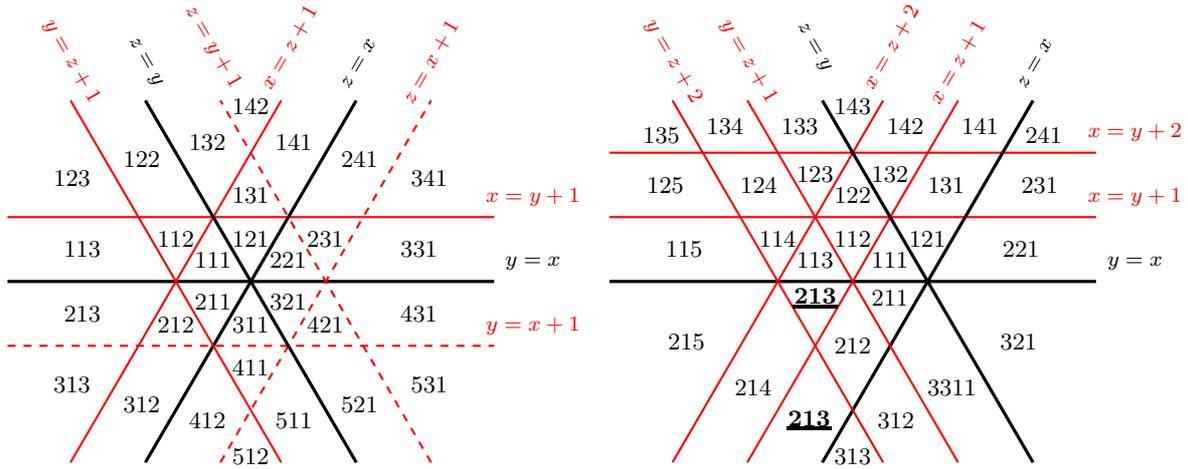
\begin{figure}[ht]
\noindent
\strut\hfill
\begin{tikzpicture}[scale=1.6]
\draw  [very thick] (-2.000,0) -- (2.000,0) node [pos=1.08,above,sloped]  {\tiny$y=x$};
\draw  [thick,red] (-2.000,0.5333) -- (2.000,0.5333) node [pos=1.08,above,sloped]  {\tiny$x=y+1$};
\draw  [thick,dashed,red] (-2.000,-0.5333) -- (2.000,-0.5333)node [pos=1.08,above,sloped]  {\tiny$y=x+1$};
\draw  [very thick] (-.867,-1.500) -- (.867,1.500) node [pos=1.08,above,sloped]  {\tiny$z=x$};
\draw  [thick,dashed,red] (-0.2502,-1.500)  -- (1.482,1.500)  node [pos=1.08,above,sloped]  {\tiny$z=x+1$};
\draw  [thick,red] (-1.482,-1.500) -- (0.2502,1.500) node [pos=1.12,above,sloped]  {\tiny$x=z+1$};
\draw  [very thick]  (.867,-1.500) -- (-.867,1.500) node [pos=1.08,above,sloped]  {\tiny$z=y$};
\draw  [thick,dashed,red] (1.482,-1.500) -- (-0.2502,1.500) node [pos=1.12,above,sloped]  {\tiny$z=y+1$};
\draw  [thick,red] (0.2502,-1.500) -- (-1.482,1.500) node [pos=1.08,above,sloped]  {\tiny$y=z+1$};

\node at(-1.469,.855){\scriptsize$123$};
\node at(-.895,1.016){\scriptsize$122$};
\node at(-.357,1.150){\scriptsize$132$};
\node at(0,1.45){\scriptsize$142$};
\node at(.356,1.150){\scriptsize$141$};
\node at(.894,1.016){\scriptsize$241$};
\node at(1.468,.855){\scriptsize$341$};
\node at(0,.711){\scriptsize$131$};
\node at(-1.385,.266){\scriptsize$113$};
\node at(-.616,.355){\scriptsize$112$};
\node at(-.308,.177){\scriptsize$111$};
\node at(0,.355){\scriptsize$121$};
\node at(.307,.177){\scriptsize$221$};
\node at(.615,.355){\scriptsize$231$};
\node at(1.384,.266){\scriptsize$331$};
\node at(-1.385,-.267){\scriptsize$213$};
\node at(-.616,-.356){\scriptsize$212$};
\node at(-.308,-.168){\scriptsize$211$};
\node at(0,-.356){\scriptsize$311$};
\node at(.307,-.168){\scriptsize$321$};
\node at(.615,-.356){\scriptsize$421$};
\node at(1.384,-.267){\scriptsize$431$};
\node at(-1.469,-.856){\scriptsize$313$};
\node at(-.895,-1.017){\scriptsize$312$};
\node at(-.357,-1.150){\scriptsize$412$};
\node at(0,-1.45){\scriptsize$512$};
\node at(0,-.712){\scriptsize$411$};
\node at(.356,-1.150){\scriptsize$511$};
\node at(.894,-1.017){\scriptsize$521$};
\node at(1.469, -0.8556){\scriptsize$531$};
\end{tikzpicture}
\hfill
\begin{tikzpicture}[scale=1.6]
\draw  [very thick] (-2.000,0) -- (2.000,0) node [pos=1.08,above,sloped]  {\tiny$y=x$};
\draw  [thick,red] (-2.000,0.5333) -- (2.000,0.5333) node [pos=1.08,above,sloped]  {\tiny$x=y+1$};
\draw  [thick, red] (-2.000,1.067) -- (2.000,1.067) node [pos=1.08,above,sloped]  {\tiny$x=y+2$};
\draw  [very thick] (-0.2502,-1.500) -- (1.482,1.500)  node [pos=1.08,above,sloped]  {\tiny$z=x$};
\draw  [thick, red] (-1.482,-1.500) -- (0.2502,1.500) node [pos=1.12,above,sloped]  {\tiny$x=z+2$};
\draw  [thick, red] (-.867,-1.500) -- (.867,1.500) node [pos=1.08,above,sloped]  {\tiny$x=z+1$};
\draw  [very thick]  (1.482,-1.500) -- (-0.2502,1.500) node [pos=1.12,above,sloped]  {\tiny$z=y$};
\draw  [thick,red] (0.2502,-1.500) -- (-1.482,1.500) node [pos=1.08,above,sloped]  {\tiny$y=z+2$};
\draw  [thick,red] (0.8660,-1.500) -- (-0.8660,1.500) node [pos=1.08,above,sloped]  {\tiny$y=z+1$};

\node at(-1.572,1.211){ \scriptsize$135$};
\node at(-1.049,1.283){ \scriptsize$134$};
\node at(-.434,1.283){ \scriptsize$133$};
\node at(0,1.45){ \scriptsize$143$};
\node at(.433,1.283){ \scriptsize$142$};
\node at(1.048,1.283){ \scriptsize$141$};
\node at(1.571,1.211){ \scriptsize$241$};
\node at(-1.539,.800){ \scriptsize$125$};
\node at(-.770,.800){ \scriptsize$124$};
\node at(-.308,.888){ \scriptsize$123$};
\node at(0,.711){ \scriptsize$122$};
\node at(.307,.888){ \scriptsize$132$};
\node at(.769,.800){ \scriptsize$131$};
\node at(1.538,.800){ \scriptsize$231$};
\node at(-1.385,.266){ \scriptsize$115$};
\node at(-.616,.355){ \scriptsize$114$};
\node at(-.308,.177){ \scriptsize$113$};
\node at(0,.355){ \scriptsize$112$};
\node at(.307,.177){ \scriptsize$111$};
\node at(.615,.355){ \scriptsize$121$};
\node at(1.384,.266){ \scriptsize$221$};
\node at(-1.366,-.500){ \scriptsize$215$};
\node at(-.818,-.884){ \scriptsize$214$};
\node at(-.308,-.138){\uline{\scriptsize$\mathbf{213}$}};
\node at(-.357,-1.150){\uline{\scriptsize$\mathbf{213}$}};
\node at(0,-1.45){ \scriptsize$313$};
\node at(.307,-.138){ \scriptsize$211$};
\node at(0,-.534){ \scriptsize$212$};
\node at(.356,-1.150){ \scriptsize$312$};
\node at(.817,-.884){ \scriptsize$3311$};
\node at(1.365,-.500){ \scriptsize$321$};
\end{tikzpicture}
\hfill\strut
\caption{Pak-Sanley labeling of the $3$-dimensional Catalan arrangement (left), and of the $(2,0)$-Coxeter arrangement (right). Same number of labels ($30$) ---the labels of the two arrangements are reverse from each other--- but different number of regions ($30$ and $31$, resp.)}
\label{ninj.fig}
\end{figure}

\section{Pak-Stanley labels of the $m$-Catalan arrangement}
\noindent
\begin{definition}\label{def.crc}
Let $\ba=(a_1,\dotsc,a_n)\in\N^n$ and consider an integer $p$.
The \emph{$p$-center of $\ba$}, $Z_p(\ba)$, is the largest subset $X = \{ x_1, \ldots, x_q\}$ of $[n]$ with
$1\leq x_q<x_{q-1}<\dotsb<x_1\leq n$
and with the property that $a_{x_j} \leq  p+j$ for every $j \in [q]$~\footnote{The \emph{$0$-center} in here is precisely the
\emph{center} as we have introduced it before \cite{DGO2, DGO3}, and previously in reversed form
\cite{DGO}.}.  Clearly,  if $p<0$, then $Z_p(\ba)=\varnothing$.
Let $z_p(\ba)=|Z_p(\ba)|$.
Note that since, by definition,  $Z_0(\ba)\subseteq Z_1(\ba)\subseteq\dotsi\subseteq Z_p(\ba)$,
$z_0(\ba)\leq z_1(\ba)\leq\dotsb\leq z_p(\ba)$.
If $\ba$ is a label of the $m$-Catalan arrangement in dimension $n$, we define \emph{center of $\ba$} to be
$$\bz(\ba):=\big(z_0(\ba),z_1(\ba),\dotsc,z_{(n-1)m}(\ba)\big)\,.$$
 Hence, in particular, $\bz(\ba)$ is weakly increasing. See Figure~\ref{Dycka} for one interpretation of the concept of \emph{center} in this way.
\end{definition}

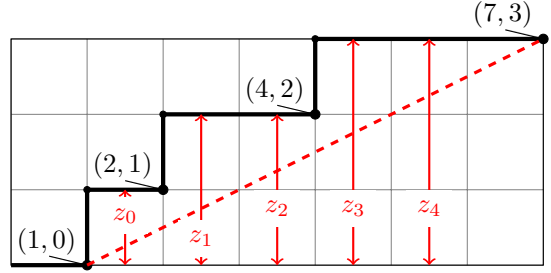
\begin{figure}[ht]
\noindent
\begin{tikzpicture}[scale=1.]
\draw[step=1cm,gray, very thin] (0,0) grid (7,3);
\draw[thin] (0,0) -- (7,0) -- (7,3) -- (0,3) -- cycle;
\draw [fill] (1,0) node [above left] {\footnotesize$(1,0)$} circle [radius=.0625] -- (.5,.12);
\draw [fill] (1,1) node {} circle [radius=.0425];
\draw [fill] (2,1) node [above left] {\footnotesize$(2,1)$} circle [radius=.0625] -- (1.5,1.12);
\draw [fill] (2,2) node {} circle [radius=.0425];
\draw [fill] (4,2) node [above left] {\footnotesize$(4,2)$} circle [radius=.0625] -- (3.5,2.12);
\draw [fill] (4,3) node {} circle [radius=.0425];
\draw [fill] (7,3) node [above left] {\footnotesize$(7,3)$} circle [radius=.0625] -- (6.5,3.12);
\draw [ultra thick]  (0,0) -- (1,0) -- (1,1) -- (2,1) -- (2,2) -- (4,2) -- (4,3) -- (7,3);
\draw [very thick,red, dashed] (1,0) -- (7,3);
\draw [thick, red, <->] (1.5,0) -- node [red,  fill=white, above=-0.1cm] {\footnotesize$z_0$}(1.5,1);
\draw [thick, red, <->] (2.5,0) -- node [red,  fill=white, above=-0.9cm] {\footnotesize$z_1$}(2.5,2);
\draw [thick, red, <->] (3.5,0) -- node [red,  fill=white, above=-0.5cm] {\footnotesize$z_2$}(3.5,2);
\draw [thick, red, <->] (4.5,0) -- node [red,  fill=white, above=-1cm] {\footnotesize$z_3$}(4.5,3);
\draw [thick, red, <->] (5.5,0) -- node [red,  fill=white, above=-1cm] {\footnotesize$z_4$}(5.5,3);
\end{tikzpicture}\hfill
\caption{Dyck path representation of $124$ ---note that $\bz(124)=(1,2,2,3,3)$}
\label{Dycka}
\end{figure}

We present now our Key Lemma.
 \begin{lemma}
Let $\pi,\rho\in\S_n$ differ by an adjacent transposition, say $\rho=\tp\circ\pi$ for some $i\in[0,n-1]$,
and let $\bb=(b_1,\dotsc,b_n)$ and $\bc=(c_1,\dotsc,c_n)$ be the Pak-Stanley labels of the two images, by $\pi$ and by
$\rho$, respectively,  of the same region $R$ of the fundamental chamber of the $m$-Catalan arrangement. Then
$$\bc=\bb\circ\tp+{}\begin{cases}e_{i},&\text{ if $b_i\leq b_{i+1}$}\\(-e_{i+1}),&\text{ otherwise.}\end{cases}\,.$$
\label{keylemma}
\end{lemma}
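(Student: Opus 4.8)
The plan is to reduce the statement to a purely combinatorial identity about inversion tables, using the transformation formula for labels under the $\S_n$-action established in the $k=\ell$ section. Write $\ba=(a_1,\dots,a_n)$ for the Pak-Stanley label of $R$ itself; since $R$ lies in the fundamental chamber, $\ba$ is weakly increasing. That formula gives $\bb=I(\pi)+\ba\circ\pi^{-1}$ and $\bc=I(\rho)+\ba\circ\rho^{-1}$. As $\rho=\tp\circ\pi$ and $\tp$ is an involution, $\rho^{-1}=\pi^{-1}\circ\tp$, so $\ba\circ\rho^{-1}=(\ba\circ\pi^{-1})\circ\tp=(\bb-I(\pi))\circ\tp$. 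Substituting and distributing $\circ\tp$ over the sum yields the clean identity
\[
\bc=\bb\circ\tp+\bigl(I(\rho)-I(\pi)\circ\tp\bigr),
\]
in which the region-dependent part $\bb\circ\tp$ is separated from a correction term depending only on $\pi$.

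The first main step is to evaluate the correction $I(\rho)-I(\pi)\circ\tp$. I would compare the inversion tables of $\pi$ and $\rho=\tp\circ\pi$ entry by entry, recalling that left multiplication by $\tp$ interchanges the values $i$ and $i+1$ in the one-line notation of $\pi$. For $k\neq i,i+1$ the two values $i,i+1$ lie on the same side of $k$, so the positions holding values exceeding $k$ are unchanged and the entries agree. A short count of the values exceeding $i$ (resp. $i+1$) lying to the left of the positions of $i$ and $i+1$ then shows that $I(\rho)-I(\pi)\circ\tp$ equals $e_i$ when $i$ precedes $i+1$ in $\pi$, i.e. $\pi^{-1}(i)<\pi^{-1}(i+1)$, and equals $-e_{i+1}$ otherwise.

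The second, decisive, step is to show that this dichotomy coincides with the sign condition in the statement, namely
\[
b_i\leq b_{i+1}\iff \pi^{-1}(i)<\pi^{-1}(i+1).
\]
I would prove the two implications from $b_i=I(\pi)_i+a_{\pi^{-1}(i)}$ and $b_{i+1}=I(\pi)_{i+1}+a_{\pi^{-1}(i+1)}$. Setting $p=\pi^{-1}(i)$, $q=\pi^{-1}(i+1)$ and using $I(\pi)=\lu(\pi)\circ\pi^{-1}$, one computes $I(\pi)_{i+1}-I(\pi)_i$ by counting values exceeding $i+1$ at positions strictly between $p$ and $q$. When $p<q$ this difference is $\geq 0$, and since $\ba$ is weakly increasing $a_q-a_p\geq 0$, so $b_{i+1}-b_i\geq 0$; when $p>q$ the same count yields a strict surplus of $1$ (contributed by the value $i+1$ standing to the left of the value $i$), forcing $b_i-b_{i+1}\geq 1>0$. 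The two hypotheses on $\pi$ are exhaustive and mutually exclusive and lead to mutually exclusive conclusions, so the equivalence follows. Combining the three displays gives the claimed formula, with the $+e_i$ case matching $b_i\leq b_{i+1}$ and the $-e_{i+1}$ case matching $b_i>b_{i+1}$.

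I expect the main obstacle to be the bookkeeping in this last step: one must track that the off-by-one contribution to $I(\pi)_i-I(\pi)_{i+1}$ arises precisely from whether the value $i+1$ lies to the left of the value $i$, and it is exactly this $\pm 1$ that absorbs the weak monotonicity of $\ba$ so as to place the boundary case $b_i=b_{i+1}$ on the correct side of the dichotomy.
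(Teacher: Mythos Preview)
Your proposal is correct and follows essentially the same approach as the paper's proof: both start from $\bb=I(\pi)+\ba\circ\pi^{-1}$, $\bc=I(\rho)+\ba\circ\rho^{-1}$, and $\rho^{-1}=\pi^{-1}\circ\tp$, compute the correction $I(\rho)-I(\pi)\circ\tp$ case by case, and establish the equivalence $b_i\le b_{i+1}\iff\pi^{-1}(i)<\pi^{-1}(i+1)$ using weak monotonicity of $\ba$ together with the $\pm1$ contribution to $I(\pi)_i-I(\pi)_{i+1}$ coming from the relative position of the values $i$ and $i+1$. The only cosmetic difference is that the paper proves the equivalence first and the inversion-table identity second, whereas you do them in the opposite order.
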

\begin{proof}
Let $\ba$ be the label of $R$ and note that $\bb=I(\pi)+\ba\circ\pi^{-1}$ and $\bc=I(\rho)+\ba\circ\rho^{-1}$.
If $\pi^{-1}(i)<\pi^{-1}(i+1)$, then by definition $I(\pi)_i\leq I(\pi)_{i+1}$, and $a_{\pi^{-1}(i)}\leq a_{\pi^{-1}(i+1)}$, since
$\ba$ is weakly increasing; so, if $\pi^{-1}(i)<\pi^{-1}(i+1)$, then $b_i\leq b_{i+1}$.
On the other hand, if $\pi^{-1}(i)>\pi^{-1}(i+1)$, then $I(\pi)_i\geq I(\pi)_{i+1}+1$ and $a_{\pi^{-1}(i)}\geq a_{\pi^{-1}(i+1)}$ and so $b_i> b_{i+1}$.
Hence,
$$b_i\leq b_{i+1} \iff \pi^{-1}(i)<\pi^{-1}(i+1)\,.$$

Now, suppose that $b_i\leq b_{i+1}$ and that we swap $i$ and $i+1$ in $\pi$; then $I(\rho)_{i+1}=I(\pi)_i$ but
$I(\rho)_i=I(\pi)_{i+1}+1$ since $i+1$ now precedes $i$ in $\rho$. When $b_i>b_{i+1}$, $I(\rho)_i=I(\pi)_{i+1}$
whereas $I(\rho)_{i+1}=I(\pi)_i-1$.  Hence
    $$I(\rho)=(I(\pi)\circ\tp)+{}\begin{cases}e_{i},&\text{ if $b_i\leq b_{i+1}$}\\(-e_{i+1}),&\text{ otherwise.}\end{cases}$$
We finish the proof by noting that $\rho^{-1}=\pi^{-1}\circ\tp$.
\end{proof}

We now introduce a new function, defined by $\bp(\ba)_i$ being the minimal position of $i$ in the sequence of centers of $\ba$.
\begin{definition} For every $\ba\in\N^n$,
let $\bp(\ba)\in\N^n$ be defined by $\bp(\ba)_i=j$ if $i\in Z_j\setminus Z_{j-1}$.
\end{definition}

\begin{proposition}\label{Kprop}
Let $\pi\in\S_n$, let $\ba\in\N^n$ be the label of a region $R$  in the fundamental chamber of the $m$-Catalan arrangement, let $\bb$ be the label of the region $\pi(R)$,
and let $p\in\{0,1,\dotsc,(m-1)n\}$. Then
\begin{enumerate}
\item $\ \forall i\in[n]\,,\quad i\in Z_p(\ba) \iff \pi(i)\in Z_p(\bb)\,$;\\[-10pt]
\item $\ \forall i,j\in[n]\,, i,j\in Z_{p}(\ba) \setminus Z_{p-1}(\ba) \implies a_i=a_j \,$;\\[-10pt] 
\item $\ \bb-\bp(\bb)=\su+I(\pi)=\mu(\bb)\,$.
\end{enumerate}
\end{proposition}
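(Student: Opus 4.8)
The plan is to settle item (2) directly from the geometry of the fundamental chamber and to obtain items (1) and (3) \emph{simultaneously} by induction on the Coxeter length of $\pi$, using Lemma~\ref{keylemma} as the inductive step. The heart of the argument is the claim that an adjacent transposition acts on the center by the transposition itself: with the notation of Lemma~\ref{keylemma}, for $\rho=\tp\circ\pi$ one has
\begin{equation}
Z_p(\bc)=\tp\bigl(Z_p(\bb)\bigr)\qquad\text{for every integer }p\ge0. \label{eq.keyshift}
\end{equation}
Once \eqref{eq.keyshift} is available the rest is short; the genuine work lies in proving it.

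For item (2) I would use that a region of the fundamental chamber carries a weakly increasing label $\ba$ by \eqref{fchamber.eq}. For such $\ba$ I first claim $Z_p(\ba)=\{\,i\in[n]\mid a_i\le p+1\,\}=\{1,\dots,z_p(\ba)\}$: listing the chosen indices decreasingly as $x_1>\dots>x_q$, the values $a_{x_1}\ge\dots\ge a_{x_q}$ are weakly decreasing while the bounds $p+1<\dots<p+q$ increase, so the single binding constraint is $a_{x_1}\le p+1$; hence the down-set $\{i\mid a_i\le p+1\}$ meets every inequality while no set containing a larger index can, which makes it the unique largest. Then $Z_p(\ba)\setminus Z_{p-1}(\ba)=\{i\mid a_i=p+1\}$, giving (2) at once; the same description yields $\bp(\ba)_i=a_i-1$, that is $\ba-\bp(\ba)=\su=\mu(\Id_{[n]})$, which is the base case $\pi=\Id_{[n]}$ of (3).

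To prove \eqref{eq.keyshift} I would first record a greedy description of the center valid for an \emph{arbitrary} $\bb\in\N^n$: scan the indices $n,n-1,\dots,1$ while keeping a counter $c$, and accept $k$ (incrementing $c$) exactly when $b_k\le p+c+1$. As accepting only enlarges $c$ and thereby loosens all later tests, this greedy is optimal on every suffix of indices, its skipped indices lie in no admissible chain, and the accepted set is therefore the unique largest one, equal to $Z_p(\bb)$. Passing from $\bb$ to $\bc$, only the coordinates $i,i+1$ change, so the scan is identical on $\{i+2,\dots,n\}$ and reaches index $i+1$ with a common count $c_0$; it then suffices to verify that after processing $i+1$ and $i$ the counter returns to the same value for both words and that the memberships of $i$ and $i+1$ are interchanged. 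In the case $b_i\le b_{i+1}$, where $\bc_{i+1}=b_i$ and $\bc_i=b_{i+1}+1$, abbreviate $A:\ b_{i+1}\le p+c_0+1$ and $B:\ b_i\le p+c_0+1$ (so $A\Rightarrow B$); a direct check then produces, for \emph{both} words, the common final counts $c_0+2,\ c_0+1,\ c_0$ in the three regimes $A$, $\neg A\wedge B$, $\neg B$, together with $i+1\in Z_p(\bb)\Leftrightarrow A\Leftrightarrow i\in Z_p(\bc)$ and $i\in Z_p(\bb)\Leftrightarrow B\Leftrightarrow i+1\in Z_p(\bc)$. The case $b_i>b_{i+1}$, where $\bc_{i+1}=b_i-1$ and $\bc_i=b_{i+1}$, is entirely parallel, the analogous implication between the two acceptance tests now pointing the other way and again matching counts and swapping memberships. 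Since the counter agrees after index $i$, the two scans also coincide on $\{1,\dots,i-1\}$, and \eqref{eq.keyshift} follows. I expect this bookkeeping through the two swapped positions to be the main obstacle, precisely because $\bc$ need not be weakly increasing, so one cannot bypass the greedy counter.

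Granting \eqref{eq.keyshift} for the step $\rho=\tp\circ\pi$, item (1) is inherited from the inductive hypothesis: setting $k=\pi(i)$, one has $i\in Z_p(\ba)\Leftrightarrow k\in Z_p(\bb)\Leftrightarrow\tp(k)\in Z_p(\bc)\Leftrightarrow\rho(i)\in Z_p(\bc)$. For item (3), \eqref{eq.keyshift} gives $\bp(\bc)=\bp(\bb)\circ\tp$ immediately, so writing the Key Lemma decomposition as $\bc=\bb\circ\tp+\delta$ with $\delta=e_i$ if $b_i\le b_{i+1}$ and $\delta=-e_{i+1}$ otherwise, and recalling the identical correction $I(\rho)=I(\pi)\circ\tp+\delta$ isolated in its proof, I would compute
\begin{align*}
\bc-\bp(\bc)&=(\bb-\bp(\bb))\circ\tp+\delta\\
&=(\su+I(\pi))\circ\tp+\delta\\
&=\su+I(\pi)\circ\tp+\delta=\su+I(\rho)\,,
\end{align*}
using $\su\circ\tp=\su$ and the inductive hypothesis $\bb-\bp(\bb)=\su+I(\pi)$. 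As every $\pi\in\S_n$ is reachable from $\Id_{[n]}$ by adjacent transpositions, this closes the induction and establishes (1) and (3).
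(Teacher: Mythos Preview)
Your proof is correct and follows the same architecture as the paper's: part~(2) from the weakly-increasing structure of $\ba$, and part~(1) by induction on Coxeter length via the Key Lemma---you supply the detailed case analysis for \eqref{eq.keyshift} through a greedy scan where the paper simply asserts that it ``follows immediately from Lemma~\ref{keylemma}.'' For part~(3) the paper takes a shorter route than your separate induction: once~(1) gives $\bp(\bb)=\bp(\ba)\circ\pi^{-1}=(\ba-\su)\circ\pi^{-1}$, the global formula $\bb=I(\pi)+\ba\circ\pi^{-1}$ from \S2.3 yields $\bb-\bp(\bb)=\su+I(\pi)$ in one line.
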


\begin{proof}
For the first part, it is sufficient to prove that, if $\pi$, $\rho$, $\bb$ and $\bc$ are as described in Lemma~\ref{keylemma},
then $\pi(i)\in Z_j(\bb)$ if and only if $\rho(i)\in Z_j(\bc)$. This fact follows immediately from Lemma~\ref{keylemma}.
In fact, we may consider only the case where, for some $j\in[n]$, $\pi=(j,j+1)$. Then,  by Lemma~\ref{keylemma},
\begin{align*}\bb&=\ba\circ(j,j+1)+{}\begin{cases}e_{i},&\text{ if $a_i\leq a_{i+1};$}\\(-e_{i+1}),&\text{ if $a_i> a_{i+1}.$}\end{cases}\\
\arraycolsep=1.7pt
&=\left\{\begin{array}{rcccll}
(a_1,\dotsc,a_{j-1},&a_{j+1}+1&,&a_j&,a_{j+2},\dotsc,a_n)\,,& \text{\quad if $a_i\leq a_{i+1};$}\\[5pt]
(a_1,\dotsc,a_{j-1},&a_{j+1}&,&a_j-1&,a_{j+2},\dotsc,a_n)\,,&\text{\quad if $a_i> a_{i+1}.$}
\end{array}\right.\end{align*}
For the second part, note that $\ba$ is weakly increasing, and so $i\in Z_{a_i-1}(\ba)$ but $i\notin Z_{a_i-2}(\ba)$,
and thus $\bp=\ba-\su$.
Hence,   for any $\sigma\in\S_n$, if
$\bd$ is the Pak-Stanley label of $\sigma(R)$, then $\bp(\bd)=\ba\circ\sigma^{-1}-\su$, which proves the third part.
\end{proof}

\begin{remark}
Note that $I$, the inversion table, defines a \emph{bijection}
from $\S_n$ onto
$$[0,n-1]\times[0,n-2]\times\dotsb\times\{0\}$$
(cf. \cite[p.~36]{StanB};
for a convenient method for obtaining $\pi$ from $I(\pi)$ see S-parking \cite{DGO}). Hence, $\bb$ defines $\pi$ under the conditions of Proposition~\ref{Kprop}, and of course $\pi$ defines $\bb$.
In particular, the number of different labels of the regions of the same orbit under the action of $\S_n$ is $n!$.
\label{bij.rem}
\end{remark}

\begin{example}
In the $3$-dimensional $2$-Catalan arrangement represented in Figure~\ref{2Catalan}, the $\S_3$-orbit of $124$ is
$\{124,152,351,631,612,314\}$.
For example, the region labeled with $152$ belongs to the chamber of
$1\,3\,2=\tq{2}{3}\circ1\,2\,3$. Now, $152=124\circ\tq{2}{3}+e_2=142+010$. For another example,
the region labeled with $351$ belongs to the chamber of $3\,1\,2=\tq{2}{3}\circ\tq{1}{2}\circ1\,2\,3$, and
$124\circ\tq{1}{2}\circ\tq{2}{3}+e_1+e_2=351$.

Consider now e.g. $\bb=612$, which, as we said before, belongs to the orbit of $\ba=124$ (cf. Figure~\ref{2Catalan}), which has center
$\bz(\ba)=(1,2,2,3,3)$.
We can see that  also $\bz(\bb)=(1,2,2,3,3)$, being  $Z_0(\bb)=\{2\}$, $Z_1(\bb)=Z_2(\bb)=\{2,3\}$, $Z_3(\bb)=Z_4(\bb)=\{2,3,1\}$. Hence,
$\bb-\bp(\bb)=612-301=311$ and so, by Proposition~\ref{Kprop}, $I(\pi)=200$.
It follows that $\pi=2\,3\,1$ (cf. Figure~\ref{2Catalan}). But we can obtain this result in an easier, pictorial way, by using a representation similar
to that of Figure~\ref{Dycka} (cf. Figure~\ref{Dyck.inv}).
We come to this in Section~\ref{LDP.sec}
with the \emph{labeled Dyck paths}.
\end{example}

\begin{theorem}
In the $m$-Catalan arrangement, the center  is $\S_n$-invariant. More precisely, $\bz(\ba)=\bz(\bb)$ if and only if
$\ba$ and $\bb$ label two regions that belong to the same orbit under the action of $\S_n$.
\label{z.thm}
\end{theorem}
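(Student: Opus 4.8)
The plan is to prove the two implications separately, reducing both to the unique weakly increasing label that each $\S_n$-orbit carries. Recall from the discussion of the case $k=\ell$ that every chamber is the image of the fundamental chamber under some $\pi\in\S_n$, so each region of the $m$-Catalan arrangement is $\pi(R)$ for a unique fundamental-chamber region $R$ and a unique $\pi$; hence each orbit meets the fundamental chamber in exactly one region, whose label is the weakly increasing vector satisfying~\eqref{fchamber.eq}. Since the Pak-Stanley labeling is injective on the fundamental chamber, orbits, fundamental-chamber regions and weakly increasing labels correspond bijectively, and so ``$\ba$ and $\bb$ lie in the same orbit'' is equivalent to ``$\ba$ and $\bb$ have the same weakly increasing representative''.

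For the forward implication I would take the common orbit's fundamental representative, with weakly increasing label $\ba_0$, and write $\ba$, $\bb$ as the labels of $\sigma(R)$ and $\tau(R)$. Proposition~\ref{Kprop}(1) gives, for each $p$, the equivalences $i\in Z_p(\ba_0)\iff\sigma(i)\in Z_p(\ba)$ and $i\in Z_p(\ba_0)\iff\tau(i)\in Z_p(\bb)$; as $\sigma,\tau$ are bijections of $[n]$, this forces $z_p(\ba)=z_p(\ba_0)=z_p(\bb)$ for all $p$, i.e.\ $\bz(\ba)=\bz(\bb)$. So the center is orbit-invariant, essentially by the proposition alone.

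The substance is the converse. Using the orbit-invariance just proved, if $\ba_0,\bb_0$ denote the weakly increasing representatives of the orbits of $\ba$ and $\bb$, then $\bz(\ba_0)=\bz(\ba)=\bz(\bb)=\bz(\bb_0)$, so it suffices to show that $\bz$ is \emph{injective} on weakly increasing labels. Here I would invoke the identity $\bp(\ba)=\ba-\su$ valid for weakly increasing $\ba$ (established inside the proof of Proposition~\ref{Kprop}(2)), which says that index $i$ first enters the nested family $Z_0(\ba)\subseteq Z_1(\ba)\subseteq\dotsb$ precisely at level $a_i-1$. Reading this off the cardinalities, for every $p\geq0$ one gets
$$z_p(\ba)-z_{p-1}(\ba)=\bigl|\{\,i\in[n]: a_i=p+1\,\}\bigr|\qquad(z_{-1}(\ba):=0).$$
Because the entries of a fundamental-chamber label lie in $[1,1+m(n-1)]$ by~\eqref{fchamber.eq}, the coordinates $z_0,\dotsc,z_{(n-1)m}$ of $\bz(\ba)$ record the entire multiset of values of $\ba$; as $\ba$ is weakly increasing, that multiset determines $\ba$. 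Doing the same for $\bb_0$ and using $\bz(\ba_0)=\bz(\bb_0)$ yields $\ba_0=\bb_0$, hence $\ba$ and $\bb$ share a representative and lie in the same orbit.

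I expect the main obstacle to be the bookkeeping of the reduction rather than any single hard estimate: one must be sure that each orbit meets the fundamental chamber exactly once and that the center is genuinely orbit-invariant, both of which rest on Proposition~\ref{Kprop}(1) together with the $k=\ell$ chamber picture. Once these are in place the injectivity step is short, the only computation worth recording being the verification that the index range $p\in\{0,\dotsc,(n-1)m\}$ captures every value of $\ba$---equivalently that $Z_{(n-1)m}(\ba)=[n]$, so that the multiplicities $z_p-z_{p-1}$ sum to $n$.
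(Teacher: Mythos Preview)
Your proposal is correct and follows essentially the same route as the paper: both directions reduce to the fundamental chamber, the forward implication is immediate from Proposition~\ref{Kprop}(1), and the converse amounts to showing that $\bz$ is injective on weakly increasing labels. The only cosmetic difference is in how that injectivity is expressed---the paper records the explicit inverse formula $a_i=\min\{j\mid z_{j-1}\ge i\}$ (equation~\eqref{fz2a.eq}), whereas you recover the multiset of entries via the successive differences $z_p-z_{p-1}$; these are the same observation.
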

\begin{proof}
That if $\ba$ and $\bb$ label two regions that belong to the same orbit under the action of $\S_n$,
then $\bz(\ba)=\bz(\bb)$, follows immediately from Proposition~\ref{Kprop}.
For the other direction, consider the representation of the label $\ba=(a_1,\dotsc,a_n)\in\N^n$ of a \emph{region of the fundamental chamber}
as a Dyck path  through $(a_i,i-1)$ and $(a_i,i)$ for every $i\in[n]$, ending at
$(kn+1,n)$ and lying above $d$, as described before, and note that in general we may read the centers as indicated in
Figure~\ref{Dycka}.
In words, since $\ba$ is weakly increasing,
\begin{align}
&z_i(\ba)=\max\{j\in[n]\mid a_j\leq i+1\}\,.\label{fa2z.eq}\\
\shortintertext{On the opposite direction, this means that if $\bz=(z_0,\dotsc,z_{(m-1)n})$ and $\ba=(a_1,\dotsc,a_n)$ are weakly increasing and
$\bz=\bz(\ba)$, then}
&a_i=\min\{j\in[(m-1)n+1]\mid z_{j-1}\geq i\}\,.\label{fz2a.eq}\rlap{\qquad\qquad\qquad\qed\popQED}
\end{align}
\end{proof}

\section{$m$-Catalan functions}

We are now able to characterize the labels in the Pak-Stanley labeling of the $m$-Catalan arrangement.
\begin{definition}\label{def.cf}
An \emph{$m$-Catalan function} of dimension $n$ is $\ba=(a_1,\dotsc,a_n)\in\Z^n$
such that, for every $i\in[n]$,
$$z_{(i-1)m}(\ba)\geq i\,.$$
\end{definition}

\begin{lemma}
Let $\bz=\big(z_0,z_1,\dotsc,z_p\big)$ with $p\geq n$ and $z_0\leq z_1\leq\dotsb\leq z_p=n$. Then there
exist $n!$ different values of $\ba\in\N^n$ such that $(z_0,z_1,\dotsc,z_p)=\bz(\ba)$.
\label{ct.lemma}\end{lemma}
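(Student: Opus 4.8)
The plan is to show that a weakly increasing sequence $\bz=(z_0,\ldots,z_p)$ of the given form is the center $\bz(\ba)$ of exactly one weakly increasing $\ba\in\N^n$, and then invoke the $\S_n$-orbit structure to produce the remaining factor of $n!$. First I would use the inversion formula \eqref{fz2a.eq} already established in the proof of Theorem~\ref{z.thm}: given $\bz$, define
\begin{equation*}
a_i=\min\{j\in[p+1]\mid z_{j-1}\geq i\}\qquad(i\in[n])\,.
\end{equation*}
This is well defined precisely because $z_p=n\geq i$, so the set is nonempty. I would then check that this $\ba$ is weakly increasing (immediate, since the defining sets shrink as $i$ grows) and that $\bz(\ba)=\bz$, which is exactly the content of \eqref{fa2z.eq}--\eqref{fz2a.eq} read as mutually inverse bijections between weakly increasing centers and weakly increasing labels. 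The condition $z_0\leq\cdots\leq z_p=n$ guarantees we land in $\N^n$ with all coordinates defined.

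Having produced one weakly increasing preimage $\ba$, the second half of the plan is to count \emph{all} preimages. The key observation is that by Theorem~\ref{z.thm} the center is $\S_n$-invariant: any region in the $\S_n$-orbit of the region labeled $\ba$ has the same center $\bz$. By Remark~\ref{bij.rem}, the number of distinct labels occurring in a single $\S_n$-orbit is exactly $n!$, because the inversion table $I$ biject{}s $\S_n$ onto $[0,n-1]\times\cdots\times\{0\}$ and, under the conditions of Proposition~\ref{Kprop}, the label $\bb$ of $\pi(R)$ determines $\pi$ (via $I(\pi)=\bb-\bp(\bb)-\su$) and conversely. So the orbit contributes exactly $n!$ distinct labels, all with center $\bz$.

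The remaining point is to argue there are \emph{no other} preimages beyond this single orbit, i.e.\ that every $\ba'\in\N^n$ with $\bz(\ba')=\bz$ lies in the orbit of the weakly increasing $\ba$. This follows from the forward direction of Theorem~\ref{z.thm} combined with injectivity: a label $\ba'$ with center $\bz$ has the same center as $\ba$, hence (by the full biconditional of Theorem~\ref{z.thm}) labels a region in the same $\S_n$-orbit as the region of $\ba$; and each orbit supplies exactly one weakly increasing representative (the one in the fundamental chamber), so the orbit of $\ba$ is the unique orbit with center $\bz$. Thus the full preimage of $\bz$ under $\bz(\cdot)$ is precisely this one orbit of $n!$ labels.

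The main obstacle I anticipate is a bookkeeping subtlety rather than a conceptual one: I must ensure that $p\geq n$ (so that the padding of the center sequence is long enough) is used correctly, and that the hypothesis $z_p=n$ is what forces every $a_i$ to be defined and bounded, so that the constructed $\ba$ genuinely lies in $\N^n$ and is a legitimate label. I would also want to double-check that Theorem~\ref{z.thm} is being applied to labels of actual regions; since here $\bz$ is given abstractly, the cleanest route is to first construct the weakly increasing $\ba$ via \eqref{fz2a.eq}, verify it satisfies the fundamental-chamber bound \eqref{fchamber.eq} (equivalently, that the associated Dyck path stays above the diagonal $d$), and only then deduce that it is a genuine label whose orbit has the required size.
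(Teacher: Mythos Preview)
Your plan has a genuine circularity problem. Look at how the paper actually \emph{uses} Lemma~\ref{ct.lemma} in the proof of the Main Theorem: starting from an arbitrary $m$-Catalan function $\bb$, one builds the weakly increasing $\ba$ with the same center, observes that the orbit of the region labeled $\ba$ already supplies $n!$ distinct labels with center $\bz$ (Remark~\ref{bij.rem} and Proposition~\ref{Kprop}), and then invokes Lemma~\ref{ct.lemma} to say there are \emph{only} $n!$ vectors in $\N^n$ with that center --- hence $\bb$ must be one of the orbit labels. So the content the Main Theorem extracts from the lemma is precisely the upper bound ``at most $n!$''. Your argument supplies the lower bound ``at least $n!$'' via the orbit, but for the upper bound you write: ``a label $\ba'$ with center $\bz$ \dots\ hence (by the full biconditional of Theorem~\ref{z.thm}) labels a region in the same $\S_n$-orbit''. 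The problem is that $\ba'$ is an arbitrary element of $\N^n$; you have no right to call it a label. Theorem~\ref{z.thm} is stated and proved only for $\ba,\bb$ that \emph{already} label regions. The only result in the paper that would let you conclude an arbitrary vector with center $\bz$ is a label is the Main Theorem itself --- which depends on this lemma. That is the circle.

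There is a second, related gap. The hypothesis of Lemma~\ref{ct.lemma} is merely $z_0\le z_1\le\dotsb\le z_p=n$ with $p\ge n$; it does \emph{not} assume the $m$-Catalan inequality $z_{(i-1)m}\ge i$. So the weakly increasing $\ba$ you build from \eqref{fz2a.eq} need not satisfy \eqref{fchamber.eq} and need not label any region at all (take e.g.\ $n=2$, $\bz=(0,0,2)$, giving $\ba=(3,3)$). Your final paragraph proposes to ``verify it satisfies the fundamental-chamber bound'', but under the stated hypotheses there is nothing to verify --- it can simply fail. The paper's proof sidesteps both issues with a self-contained count: choose $Z_0\subseteq[n]$ in $\binom{n}{z_0}$ ways and fill its values in $z_0!$ ways, then choose and fill each successive layer $Z_k\setminus Z_{k-1}$; the product telescopes to $n!$. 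This argument never mentions regions, orbits, or the arrangement, which is exactly why it can feed into the Main Theorem without circularity.
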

\begin{proof}
Let us see that we might choose $\ba$ such that $(z_0,z_1,\dotsc,z_p)=\bz(\ba)$ in $n!$ different ways.
First, we may choose
$Z_0\subseteq[n]$ such that $z_0=|Z_0|$ in $\binom{n}{z_0}$ different ways.
Let $Z_0=\{i_1,\dotsc,i_{z_0}\}$ with $i_1<\dotsb<i_{z_0}$. Then  $(a_{i_1},\dotsc,a_{i_{z_0}})$
may be chosen in $z_0!$ many ways such that $Z_0=Z_0(\ba)$, being any element of
$$[z_0]\times[z_0-1]\times\dotsb\times\{1,2\}\times\{1\}\,.$$
Now, let $k$ be the least integer such that
$z_k>z_0$;
we must choose the next set
$Z_k\supseteq Z_0$, which we may do in $\binom{n-z_0}{z_k-z_0}$ ways, and fill in the elements of this set in
$(z_k-z_0)!$ ways.
In fact, if $Z_k\setminus Z_0=\{i_1,i_2,\dotsc,i_\ell\}$ with
$i_\ell<i_{\ell-1}<\dotsb<i_1$ and $\ell=z_k-z_0$, and
$$t_j=\big|\{z\in Z_0\mid z>i_j  \big\}\big|\,,\ j=1,2,\dotsc,\ell$$
(and hence $t_1\leq t_2\leq\dotsb\leq t_\ell$) then, for having $Z_k=Z_k(\ba)$, we may still choose the values of $\ba$ of order $i_\ell,i_{\ell-1},\dotsc,i_1$ as any element of
\begin{equation}
\{k+t_\ell+1,\dotsc,k+t_\ell+\ell\}\times\dotsb\times\{k+t_2+1,k+t_2+2\}\times\{k+t_1+1\}\,.
\label{eq.fc}\end{equation}
Now, let $m$ be the least integer such that
$z_m>z_k$. Then, there are $\binom{n-z_k}{z_m-z_k}$ ways of choosing the next $(z_m-z_k)!$ elements, and so on.
Note that
\begin{align*}
&\binom{n}{z_0}z_0!\binom{n-z_0}{z_k-z_0}(z_k-z_0)!=\frac{n!}{(n-z_k)!}\\
\shortintertext{and}
&\frac{n!}{(n-z_k)!}\binom{n-z_k}{z_m-z_k}(z_m-z_k)!=\frac{n!}{(n-z_m)!}
\end{align*}
Hence, at the end there are $n!=\frac{n!}{(n-n)!}$   values of $\ba\in\N^n$ such that $\bz(\ba)=\bz$.
\end{proof}

\begin{theorem}[Main Theorem]
The labels in the Pak-Stanley labeling of the $m$-Catalan arrangement of dimension $n$ are exactly the
 $m$-Catalan functions of the same dimension. This labeling is bijective.
\end{theorem}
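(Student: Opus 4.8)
The plan is to assemble Theorem~\ref{z.thm}, Lemma~\ref{ct.lemma}, Remark~\ref{bij.rem} and the description~\eqref{fchamber.eq} of the fundamental chamber; throughout I read the $m$-Catalan functions as elements of $\N^n$, as both the Pak--Stanley labels and the vectors counted in Lemma~\ref{ct.lemma} are. The starting observation is a dictionary: for a \emph{weakly increasing} $\ba$, formula~\eqref{fa2z.eq} gives $z_{(i-1)m}(\ba)=\max\{j\mid a_j\le 1+m(i-1)\}$, and since $\{j\mid a_j\le c\}$ is an initial segment of $[n]$, the inequality $z_{(i-1)m}(\ba)\ge i$ is equivalent to $a_i\le 1+m(i-1)$. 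By~\eqref{fchamber.eq} the weakly increasing $m$-Catalan functions are therefore exactly the labels of the fundamental chamber, and by~\eqref{fz2a.eq} each center is the center of a \emph{unique} weakly increasing vector, which is one of these labels precisely when it meets the conditions of Definition~\ref{def.cf}.

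Injectivity comes first and is immediate: if two regions share a label they share its center, hence lie in a single $\S_n$-orbit by Theorem~\ref{z.thm}, while Remark~\ref{bij.rem} says the $n!$ regions of an orbit carry $n!$ distinct labels; so the two regions coincide.

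I then establish the two inclusions between labels and $m$-Catalan functions, both through centers. Any label $\bb$ is the label of $\pi(R)$ for some fundamental-chamber region $R$ (every region equals $\pi(R)$ for a fundamental $R$ and a unique $\pi\in\S_n$); the label $\ba$ of $R$ is an $m$-Catalan function by the first paragraph, and $\bz(\bb)=\bz(\ba)$ by Theorem~\ref{z.thm}, so---the condition of Definition~\ref{def.cf} depending only on the center---$\bb$ is an $m$-Catalan function. Conversely, let $\ba\in\N^n$ be an $m$-Catalan function with center $\bz=\bz(\ba)$. Its unique weakly increasing realization labels a fundamental region $R$, and the $n!$ regions of the orbit of $R$ carry, by Remark~\ref{bij.rem} and Theorem~\ref{z.thm}, $n!$ distinct labels all of center $\bz$. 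On the other hand Lemma~\ref{ct.lemma}, applied to $\bz$ (padded with trailing $n$'s to meet its hypothesis $p\ge n$ when $m=1$, which is harmless since $z_p(\ba)=n$ for every $p\ge(n-1)m$), yields exactly $n!$ vectors of $\N^n$ with center $\bz$. The orbit-labels are $n!$ distinct such vectors, so they exhaust the set; as $\ba$ is one of them, $\ba$ is a label. This proves the labels are precisely the $m$-Catalan functions, and with injectivity the labeling is bijective.

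Given the earlier results the theorem is essentially a fibre count for the center map, and the step deserving attention is surjectivity: one must see that the orbit of a single fundamental-chamber region already fills an entire fibre. This is forced by comparing Remark~\ref{bij.rem} ($n!$ distinct labels per orbit) with Lemma~\ref{ct.lemma} ($n!$ vectors per center), once Theorem~\ref{z.thm} has confined all orbit-labels to one fibre; the deeper half of Theorem~\ref{z.thm} (that equal centers force a common orbit) is what then delivers injectivity. The only bookkeeping is to keep the functions in $\N^n$, so that Lemma~\ref{ct.lemma} applies and no spurious non-positive solutions intrude, and to pad the center when $m=1$.
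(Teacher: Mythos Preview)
Your proof is correct and follows essentially the same route as the paper: reduce to the fundamental chamber via the $\S_n$-invariance of the center (Theorem~\ref{z.thm}), identify weakly increasing $m$-Catalan functions with fundamental-chamber labels through~\eqref{fchamber.eq} and~\eqref{fa2z.eq}, and then use the fibre count (Lemma~\ref{ct.lemma} versus Remark~\ref{bij.rem}) to conclude that every $m$-Catalan function with a given center already appears among the labels of a single orbit. Your write-up is in fact a bit more explicit than the paper's in two places: you spell out the injectivity argument directly (same label $\Rightarrow$ same center $\Rightarrow$ same orbit by Theorem~\ref{z.thm}, and labels are distinct within an orbit by Remark~\ref{bij.rem}), whereas the paper leaves bijectivity implicit in the surjectivity-plus-counting; and you note the need to pad the center when $m=1$ so that the hypothesis $p\ge n$ of Lemma~\ref{ct.lemma} is met, a point the paper passes over.
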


\begin{proof}
We begin by proving that the label of any region is an  $m$-Catalan function. Note that, by Theorem~\ref{z.thm}, it is sufficient to prove this result for the regions of the fundamental chamber, the weakly increasing functions
$\ba=(a_1,\dotsc,a_n)\in\N^n$ such that $a_i\leq 1+(i-1)m$ for every $i\in[n]$ (cf. \eqref{fchamber.eq}). But then
\begin{align*}
z_{(i-1)m}(\ba)&=\max\{j\in[n]\mid a_j\leq (i-1)m+1\}\\
&\geq i\,.\end{align*}
We now prove that a weakly increasing $m$-Catalan function $\ba=(a_1,\dotsc,a_n)$ is the label of a region of the
fundamental chamber.
We have seen that, if $a_1\leq\dotsb\leq a_n$ and
$\bz(\ba)=(z_0,\dotsc,z_{(m-1)n})$, then
\begin{align*}
&a_i=\min\{j\in[(m-1)n]\mid z_{j-1}\geq i\}\,.\\
\shortintertext{To say that}
&a_i\leq 1+(i-1)m\,,\ \text{for every $i\in[n]$}
\end{align*}
(cf. \eqref{fchamber.eq}) is then to say that
$$z_{(i-1)m}\geq i\,,$$
since $\bz$ is weakly increasing.

Finally, let $\bb=(b_1,\dotsc,b_n)$ be \emph{any} $m$-Catalan function with center
\begin{align*}
&\bz(\bb)=(z_0,\dotsc,z_{(m-1)n})
\shortintertext{and let $\ba=(a_1,\dotsc,a_n)$  be defined by}
&a_i=\min\{j\in[(m-1)n]\mid z_{j-1}\geq i\}\,,\ i=1,\dotsc,n
\end{align*}
(and then $a_1\leq a_2\leq\dotsb\leq a_n$). Since $\bz(\ba)=\bz(\bb)$ and hence $\ba$ is also an $m$-Catalan function,
it is the label of a region $R$ of the fundamental chamber. Now, by Lemma~\ref{ct.lemma} below and according to Remark~\ref{bij.rem},
$\bb$ is the label of some region in the orbit of $R$ under the action of $\S_n$.
\end{proof}

\subsection{Prime $m$-Catalan functions}
We have shown in an earlier work \cite[Proposition~4.11]{DGO} that the relatively bounded regions of the Shi arrangement are exactly the regions labeled with
\emph{prime parking functions}, which
 are the functions for which the unique weakly increasing rearrangement $(b_1,\dotsc,b_n)$ satisfies $b_{j+1}\leq j$ for every $j\in[n-1]$.
There is no known similar result for the $m$-Shi arrangement, where we may define the \emph{prime $m$-parking functions}
as  the functions for which the
unique weakly increasing rearrangement $(b_1,\dotsc,b_n)$ satisfies
 $$\forall\ j\in[n-1]\,,\ b_{j+1}\leq j\,m\,.$$

But this result is obvious if it is restricted to the fundamental chamber of the $m$-Shi arrangement
(or of any $(m,\ell)$-Coxeter arrangement), where
a ray from the origin crosses last a hyperplane of equation $x_j-x_{j+1}=m$.
Then, clearly a
point $P_{(x_1,\dotsc,x_n)}$ of the fundamental chamber
lies in a relatively bounded region if and only if
$$
\left\{
\arraycolsep=1pt
\begin{array}{cccccc}
&x_2+m&>&x_1&>&x_2\\
&x_3+m&>&x_2&>&x_3\\
&\vdots&&\vdots&&\vdots\\
&x_n+m&>&x_{n-1}&>&x_n
\end{array}\right.$$
Hence, fixed $1<j\leq n$, a relatively bounded region $R$ is separated from $R_0$ by at most $(j-2)m+m-1$ hyperplanes of equation
$x_i-x_j=a$ with $1\leq i<j$ and $a>0$. Thus, its label $\bb=(b_1,\dotsc,b_n)$ verifies $b_j\leq 1+(j-1)m-1$ for $j\geq2$, that is, $\bb$ is an increasing prime $m$-parking function.
On the other hand, if a hyperplane of equation $x_j=x_{j+1}+m$ separates a region $R$ of the fundamental chamber from $R_0$ (i.e., if $P_{(x_1,\dotsc,x_n)}\in R$, then $x_j>x_{j+1}+m$), then all hyperplanes of equation  $x_i=x_{j+1}+m$ with $i\leq j$ separate the same two regions. Hence,
$b_{j+1}\geq 1+j\,m$ and the label of $R$,
 $\bb$, is \emph{not} an increasing prime $m$-parking function.

 \begin{definition}\label{def.cf}
A \emph{prime} $m$-Catalan function of dimension $n$ is $\ba=(a_1,\dotsc,a_n)\in\Z^n$
such that, for every $i\in[2,n]$,
$$z_{(i-1)m-1}(\ba)\geq i\,.$$
\end{definition}

For example, in the $3$-dimensional $2$-Catalan arrangement,
$251$ is prime but $451$ is not. In fact, $\bz(251)=2\mathbf{2}2\mathbf{3}3$ but $\bz(451)=1\mathbf{1}2\mathbf{3}3$: whereas e.g.
$z_1(251)=2$,
$z_1(451)=1$.
\begin{proposition} In the $d$-dimensional $m$-Catalan arrangement, the \emph{prime} $m$-Catalan functions
are exactly the Pak-Stanley labels of the relatively bounded regions.
\end{proposition}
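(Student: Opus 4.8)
The plan is to reduce the statement to the fundamental chamber, where it has effectively already been established, and then to spread it over each $\S_n$-orbit using invariance. First I would observe that both sides of the asserted equivalence are $\S_n$-invariant. Geometrically, each $\sigma\in\S_n$ acts on $\R^n$ by permuting coordinates; this linear action fixes the lineality line $x_1=\dots=x_n$ and permutes the hyperplanes $x_i-x_j=a$ (with $a\in[-m,m]$) among themselves, so it carries regions of the $m$-Catalan arrangement to regions and preserves relative boundedness. Combinatorially, being a prime $m$-Catalan function is, by its definition, a conjunction of conditions $z_{(i-1)m-1}(\ba)\ge i$ for $i\in[2,n]$, each of which constrains a single coordinate of the center $\bz(\ba)$; since the center is $\S_n$-invariant by Theorem~\ref{z.thm}, so is this property of the label. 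Because every region lies in the $\S_n$-orbit of a region of the fundamental chamber, it then suffices to prove the equivalence for weakly increasing labels.

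So fix a region $R$ of the fundamental chamber with weakly increasing label $\ba=(a_1,\dots,a_n)$. The analysis carried out just before the definition of prime $m$-Catalan function already shows that $R$ is relatively bounded precisely when $\ba$ is an increasing prime $m$-parking function, that is, when $a_i\le(i-1)m$ for every $i\in[2,n]$. It remains to match this with the center condition. Here I would use the explicit reading \eqref{fa2z.eq}: since $\ba$ is weakly increasing, $z_p(\ba)=\max\{j\in[n]\mid a_j\le p+1\}$, so $\{j:a_j\le p+1\}$ is an initial segment of $[n]$. Taking $p=(i-1)m-1$, the inequality $z_{(i-1)m-1}(\ba)\ge i$ holds if and only if $a_i\le(i-1)m$: indeed $a_i\le(i-1)m$ forces $a_1\le\dots\le a_i\le(i-1)m$ and hence membership of $i$ in the relevant initial segment, and conversely $z_{(i-1)m-1}(\ba)\ge i$ gives $a_i\le(i-1)m$ at once. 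Thus the increasing prime $m$-parking condition and the prime $m$-Catalan condition coincide on the fundamental chamber, and combining this with the two invariances yields the proposition.

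I expect no serious obstacle: the substantive geometric step, relative boundedness versus the increasing prime $m$-parking inequalities in the fundamental chamber, is already in hand, so what remains is the short inequality translation above together with the orbit transport. The one point that needs care is the index bookkeeping: the largest center index invoked by the prime condition is $(n-1)m-1$, which is strictly below $(n-1)m$, so every $z_{(i-1)m-1}(\ba)$ used is genuinely a recorded coordinate of $\bz(\ba)$ and Theorem~\ref{z.thm} applies to it verbatim.
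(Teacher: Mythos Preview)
Your proposal is correct and follows essentially the same approach as the paper's own proof: reduce to the fundamental chamber via $\S_n$-invariance of both relative boundedness and the center, then translate between the prime $m$-parking inequalities $a_i\le(i-1)m$ and the center inequalities $z_{(i-1)m-1}(\ba)\ge i$ using \eqref{fa2z.eq}. The only cosmetic difference is that the paper handles the two directions of this translation separately via \eqref{fa2z.eq} and \eqref{fz2a.eq}, whereas you extract the biconditional directly from \eqref{fa2z.eq} alone.
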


\begin{proof}
Note that every orbit of a region under the action of $\S_n$ either contains only relatively bounded regions or only
relatively unbounded regions. Hence, by Proposition~\ref{Kprop}, all we have to show is that the prime $m$-parking functions are exactly
the prime $m$-Catalan functions  \emph{in the fundamental chamber}.
We have seen before \eqref{fa2z.eq} that, given $\ba=(a_1,\dotsc,a_n)$ with $a_1\leq\dotsb\leq a_n$,
$z_i(\ba)=\max\{j\in[n]\mid a_j\leq i+1\}$.
Hence,
$$z_{(i-1)m-1}(\ba)=\max\{j\in[n]\mid a_j\leq (i-1)m\}$$
and thus, if $\ba$ is a prime $m$-parking function and  $a_i\leq (i-1)m$, then
$z_{(i-1)m-1}(\ba)\geq i$, and so $\ba$ is a prime $m$-Catalan function.

On the other hand, by \eqref{fz2a.eq},
$$a_{i+1}=\min\{j\in[(m-1)n+1]\mid z_{j-1}\geq i+1\}\,.$$
If $\ba$ is a prime $m$-Catalan function and so $z_{i\,m-1}(\ba)\geq i+1$, then
$$i\,m\in\{j\in[(m-1)n+1]\mid z_{j-1}\geq i+1\}$$
and so $a_{i+1}\leq i\,m$, which concludes the proof.
\end{proof}

\subsection{Labeled Dyck paths}\label{LDP.sec}
Starting with the center $\bz(\bb)$, and just by drawing a Dyck path going through\\
\noindent
\strut\hfill{\scriptsize$(0,0)$---$(1,0)$---$(3/2,z_0)$---$(5/2,z_1)$---$\,\dotsb\,$---$((m-1)n+3/2,z_{(m-1)n})$---$((m-1)n+3/2,n)$---$(mn+1,n)$}\hfill\strut

\noindent
we obtain a representation similar to that of Figure~\ref{Dycka}, where we can read easily $\ba$ such that $\bz=\bz(\ba)$ for the label
$\ba$ of a region of the fundamental chamber: at each level, we just take the first coordinate of the rightmost point of the path.
We may use such a drawing to see if $\bb$ is an $m$-Catalan function or if it is prime.

But things get more interesting if, at each level $i$,  the corner closest to the rightmost point of the path is filed with an element of
$Z_i(\bb)\setminus Z_{i-1}(\bb)$. If there is only one corner to fill at each level, then we obtain, written from bottom to top, $\pi$, the permutation to whose chamber belongs $\bb$. Two examples are presented in Figure~\ref{Dyck.inv}. Note that $612$ belongs to the orbit of $124$ and $513$ to the orbit of $133$, both on the chamber of $2\,3\,1$.
\begin{figure}[t]
\noindent
\strut\hfill
\begin{tikzpicture}[scale=.75]
\draw[step=1cm,gray, very thin] (0,0) grid (7,3);
\draw[very thin, red]  (0,2) -- (1,3);
\draw[very thin, red]  (0,1) -- (2,3);
\draw[very thin, red]  (0,0) -- (3,3);
\draw[very thin, red]  (2,1) -- (4,3);
\draw[very thin, red]  (4,2) -- (5,3);
\draw[thin] (0,0) -- (7,0) -- (7,3) -- (0,3) -- cycle;
\draw [fill] (1,0) node [above left, inner sep=0pt, minimum width=0pt, rectangle, fill=white] {\scriptsize$(1,0)$}
circle [radius=.0925] -- (.5,.12);
\draw [fill] (1,1) node {} circle [radius=.0225];
\draw [fill] (2,1) node [above left] {} circle [radius=.0925];
\draw [fill] (2,2) node {} circle [radius=.0225];
\draw [fill] (4,2) node [above left] {} circle [radius=.0925];
\draw [fill] (4,3) node {} circle [radius=.0225];
\draw [fill] (7,3) node [above] {\scriptsize$(7,3)$} circle [radius=.0725] ;
\draw [very thick]  (0,0) -- (1,0) -- (1,1) -- (2,1) -- (2,2) -- (4,2) -- (4,3) -- (7,3);
\draw  (1.5,0.5) node  {$2$};
\draw  (2.5,1.5) node [rectangle, fill=white] {$1$};
\draw  (4.5,2.5) node [rectangle, fill=white] {$3$};
\draw [ultra thick, densely dashed, red] (1,0) -- (1,1) -- (3,1) -- (3,2) -- (5,2) -- (5,3) --
node [pos=.75, above left] {\scriptsize\rlap{$\downarrow$}$\ 2$-Catalan} (7,3);
\draw  (1.5,0.5) node  {$2$};
\draw  (2.5,0.5) node  {\color{gray}$2$};
\draw  (3.5,0.5) node  {\color{gray}$2$};
\draw  (4.5,0.5) node  {\color{gray}$2$};
\draw  (5.5,0.5) node  {\color{gray}$2$};
\draw  (6.5,0.5) node  {\color{gray}$2$};
\draw  (2.5,1.5) node [rectangle, fill=white, inner sep=0pt, minimum width=0pt] {$3$};
\draw  (3.5,1.5) node  {\color{gray}$3$};
\draw  (4.5,1.5) node  {\color{gray}$3$};
\draw  (5.5,1.5) node  {\color{gray}$3$};
\draw  (6.5,1.5) node  {\color{gray}$3$};
\draw  (4.5,2.5) node [rectangle, fill=white, inner sep=0pt, minimum width=0pt] {$1$};
\draw  (5.5,2.5) node  {\color{gray}$1$};
\draw  (6.5,2.5) node  {\color{gray}$1$};
\draw  (1.5,-0.5) node  {$Z_0$};
\draw  (2.5,-0.5) node  {$Z_1$};
\draw  (3.5,-0.5) node  {$Z_2$};
\draw  (4.5,-0.5) node  {$Z_3$};
\draw  (5.5,-0.5) node  {$Z_4$};
\draw  (6.5,-0.5) node  {$Z_5$};
\end{tikzpicture}
\hfill
\begin{tikzpicture}[scale=.75]
\draw[step=1cm,gray, very thin] (0,0) grid (7,3);
\draw[very thin, blue]  (0,2) -- (1,3);
\draw[very thin, blue]  (0,1) -- (2,3);
\draw[very thin, blue]  (0,0) -- (3,3);
\draw[very thin, blue]  (3,2) -- (4,3);
\draw[very thin, red]  (2,1) -- (3,2);
\draw[very thin, red]  (4,2) -- (5,3);
\draw[thin] (0,0) -- (7,0) -- (7,3) -- (0,3) -- cycle;
\draw [fill] (1,0) node [above left, inner sep=0pt, minimum width=0pt, rectangle, fill=white] {\scriptsize$(1,0)$}
circle [radius=.0925] -- (.5,.12);
\draw [fill] (1,1) node {} circle [radius=.0225];
\draw [fill] (2,1) node {} circle [radius=.0225];
\draw [fill] (3,1) node {} circle [radius=.0925];
\draw [fill] (3,2) node {} circle [radius=.0925];
\draw [fill] (4,2) node {} circle [radius=.0225];
\draw [fill] (7,3) node [above] {\scriptsize$(7,3)$} circle [radius=.0725];
\draw [very thick]  (0,0) -- (1,0) -- (1,1) -- (2,1) -- (3,1) -- (3,2) -- (3,3) --
node [pos=.9, above left] {\red\scriptsize\rlap{$\downarrow$}$\ 2$-Catalan} (7,3);
\draw [ultra thick, dashed, red] (1,0) -- (1,1) -- (3,1) -- (3,2) -- (5,2) -- (5,3) -- (7,3);
\draw [ultra thick, dashed, blue]  (2,1) --  (2,2)  -- node [rectangle, fill=white, pos=0.1, above, inner sep=1pt, minimum width=1pt] {\footnotesize prime}  (4,2) -- (4,3);
\draw  (1.5,0.5) node  {$2$};
\draw  (2.5,0.5) node  {\color{gray}$2$};
\draw  (3.5,0.5) node  {\color{gray}$2$};
\draw  (4.5,0.5) node  {\color{gray}$2$};
\draw  (5.5,0.5) node  {\color{gray}$2$};
\draw  (6.5,0.5) node  {\color{gray}$2$};
\draw  (3.5,1.5) node  {$3$};
\draw  (4.5,1.5) node  {\color{gray}$3$};
\draw  (5.5,1.5) node  {\color{gray}$3$};
\draw  (6.5,1.5) node  {\color{gray}$3$};
\draw  (3.5,2.5) node [rectangle, fill=white, inner sep=0pt, minimum width=0pt]  {$1$};
\draw  (4.5,2.5) node [rectangle, fill=white, inner sep=0pt, minimum width=0pt]  {\color{gray}$1$};
\draw  (5.5,2.5) node  {\color{gray}$1$};
\draw  (6.5,2.5) node  {\color{gray}$1$};
\draw  (1.5,-0.5) node  {$Z_0$};
\draw  (2.5,-0.5) node  {$Z_1$};
\draw  (3.5,-0.5) node  {$Z_2$};
\draw  (4.5,-0.5) node  {$Z_3$};
\draw  (5.5,-0.5) node  {$Z_4$};
\draw  (6.5,-0.5) node  {$Z_5$};
\end{tikzpicture}\hfill\strut
\caption{Labeled Dyck path representation of $612$ (left) and of $513$ (right).
Note that, for example,  $\bz(612)=12233$ and  $Z_2(612)=\{2,3\}$.}
The former is on the orbit of $124$ and the latter on the orbit of $133$. Both belong to the chamber of  $2\,3\,1$(cf. Figure~\ref{2Catalan})
\label{Dyck.inv}
\end{figure}

We present a last example in Figure~\ref{Dyck3}, where we represent the centers of $\bb=39\,551\,481$.
Note that $Z_0(\bb)=\{1,5,8\}=Z_1(\bb)$ and $Z_2(\bb)=\{1,3,4,5,6,8\}$. To order the ``towers'' in a convenient way, note that, by Proposition~\ref{Kprop},
$I(\pi)=\bb-\bp(\bb)-\su=39\,551\,481-03\,220\,260-11\,111\,111=25\,220\,110$.
Now, it is easy to see that $\pi=5\,8\,1\,3\,4\,6\,2\,7$.  The label of the region of the fundamental chamber that
belongs to the orbit of $\bb$ under the action of $\S_n$ is $\ba=11\,133\,347$ and can be directly read from the picture.

\begin{figure}[th]
\noindent
\begin{tikzpicture}[scale=.5]
\draw[step=1cm,gray, very thin] (0,0) grid (13,8);
\draw[thin] (0,0) -- (13,0) -- (13,8) -- (0,8) -- cycle;
\draw [fill] (1,0) node {} circle [radius=.0925];
\draw [fill] (1,1) node {} circle [radius=.0925];
\draw [fill] (1,2) node {} circle [radius=.0925];
\draw [fill] (1,3) node {} circle [radius=.0225];
\draw [fill] (2,3) node {} circle [radius=.0225];
\draw [fill] (3,3) node {} circle [radius=.0925];
\draw [fill] (3,4) node {} circle [radius=.0925];
\draw [fill] (3,5) node {} circle [radius=.0925];
\draw [fill] (3,6) node {} circle [radius=.0225];
\draw [fill] (4,6) node {} circle [radius=.0925];
\draw [fill] (4,7) node {} circle [radius=.0225];
\draw [fill] (5,7) node {} circle [radius=.0225];
\draw [fill] (6,7) node {} circle [radius=.0225];
\draw [fill] (7,7) node {} circle [radius=.0925];
\draw [fill] (7,8) node {} circle [radius=.0225];
\draw [fill] (8,8) node {} circle [radius=.0225];
\draw [fill] (9,8) node {} circle [radius=.0225];
\draw [fill] (13,8) node {} circle [radius=.0225];
\draw [ultra thick]  (0,0) -- (1,0) -- (1,2) -- (1,3) -- (2,3) -- (3,3) -- (3,4) -- (3,5) -- (3,6) -- (4,6) -- (4,7)
-- (5,7) -- (6,7) -- (7,7) -- (7,8) -- (9,8) -- (10.25,8) -- (10.5,8.25) -- (10.75,7.75) -- (11.25,8.25) -- (11.5,7.75) -- (11.75,8) -- (13,8) ;
\draw  (1.5,0.5) node  {\normalsize$5$};
\draw  (1.5,1.5) node  {\normalsize$8$};
\draw  (1.5,2.5) node  {\normalsize$1$};
\draw  (3.5,3.5) node  {\normalsize$3$};
\draw  (3.5,4.5) node  {\normalsize$4$};
\draw  (3.5,5.5) node  {\normalsize$6$};
\draw  (4.5,6.5) node  {\normalsize$2$};
\draw  (7.5,7.5) node  {\normalsize$7$};
\end{tikzpicture}
\caption{Labeled Dyck path for the $3$-Catalan function $\bb=39\,551\,481$}
\label{Dyck3}
\end{figure}
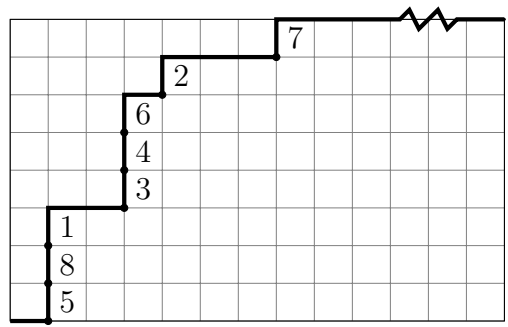

\end{document}